\documentclass{amsart}
\usepackage{mathrsfs}
\usepackage{mhequ}
\usepackage{graphicx}
\usepackage{tikz}

\newtheorem{thm}{Theorem}

\newtheorem{lem}[thm]{Lemma}
\newtheorem{prop}[thm]{Proposition}
\newtheorem{proposition}[thm]{Proposition}
\theoremstyle{definition}

\theoremstyle{remark}

\newtheorem{remark}[thm]{Remark}
\theoremstyle{theorem}

\theoremstyle{theorem}

\numberwithin{equation}{section}
\numberwithin{thm}{section}

\let\d\partial

\newcommand{\CC}{\mathcal{C}}
\newcommand{\CG}{\mathcal{G}}
\newcommand{\CO}{\mathcal{O}}

\newcommand{\CL}{\mathcal{L}}
\newcommand{\Prob}{\mathbb{P}}
\newcommand{\E}{\mathbb{E}}
\newcommand{\R}{\mathbb{R}}
\newcommand{\eps}{\varepsilon}

\def\eref#1{(\ref{#1})}
\def\scal#1{\langle#1\rangle}
\def\sign{\mathop{sign}}
\def\cD{\mathscr{D}}
\newcommand{\eqdef}{\stackrel{\mbox{\tiny def}}{=}}

\bibliographystyle{plain}
\begin{document}

\title[Periodic homogenization with an interface]{Periodic homogenization with an interface: the one-dimensional case}%
\author{Martin Hairer  \and Charles Manson}%
\address{The University of Warwick}%
\email{charliemanson1982@hotmail.co.uk}%
\address{Courant Institute, NYU}%
\email{martin.hairer@courant.nyu.edu}%

\begin{abstract}
We consider a one-dimensional diffusion process with coefficients that are periodic outside of a finite `interface
region'. The question investigated in this article is the limiting long time / large scale
behaviour of such a process under diffusive rescaling. Our main result is that it converges weakly to
a rescaled version of skew Brownian motion, with parameters that can be given explicitly in terms of the coefficients
of the original diffusion.

Our method of proof relies on the framework provided by Freidlin and Wentzell
\cite{MR1245308} for diffusion processes on a graph in order to identify the generator of the limiting
process. The graph in question consists of one vertex representing the interface region and two infinite
segments corresponding to the regions on either side.
\end{abstract}

\maketitle
\thispagestyle{empty}

\section{Introduction}
Consider a diffusion process in $\R^d$ of the type
\begin{equ}[e:defX]
dX(t) = b(X)\,dt + dB(t)\;,\quad X(0) \in \R^d\;,
\end{equ}
where $B$ is a $d$-dimensional Wiener process and $b \colon \R^d \to \R^d$ is periodic and smooth,
and define the diffusively rescaled process $X^\eps(t) = \eps X(t/\eps^2)$.
If $b$ is periodic and satisfies a natural centering condition, then it is well-known that $X^\eps$ converges in law  as $\eps \to 0$
to a Wiener process with a `diffusion tensor' that can be expressed in terms of the solution to a suitable Poisson equation, see for example
the monographs \cite{lions,MR2382139}.

Similar types of homogenization results still hold true if $b$ is not exactly periodic, but of the form $b(X) = \tilde b(X,\eps X)$, for some
smooth function $\tilde b$ that is periodic in its first argument.
In other words, $b$ consists of a slowly varying
component, modulated by fast oscillations. In this case, the limiting process is not a Brownian motion anymore, but can be an
arbitrary diffusion, whose coefficients can again be obtained by a suitable averaging procedure \cite{MR2126985}.
The aim of this article is to consider a somewhat different situation where there is an abrupt change from one type of
periodic behaviour to another, separated by an interface of size order one in the original `microscopic' scale.

To the best of our knowledge, this situation has not been considered before, although a similar problem was studied in \cite{MR2021045}.
In order to keep calculations simple, we restrict ourselves here to the one-dimensional situation. Building on this analysis, we are able to
address the multidimensional case in \cite{MultiD}.
Restricting ourselves to the one-dimensional case considerably simplifies the analysis
due to the following two facts:
\begin{itemize}
\item Any one-dimensional diffusion is reversible, so that its invariant measure can be given explicitly.
\item The `interface' is a zero-dimensional object, so that it cannot exhibit any internal structure in the limit.
\end{itemize}
Before we give a more detailed description of our results, let us try to `guess' what any limiting process $X^0$
should look like, if it exists. Away from the interface, we can apply the existing results on periodic homogenization,
as in \cite{lions,MR2382139}. We can therefore
compute diffusion coefficients $C_\pm$ such that $X^0$ is expected to behave like $C_+ W(t)$ whenever $X^0 > 0$ and like
$C_- W(t)$ whenever $X^0 < 0$, for some Wiener process $W$. One possible way of constructing a Markov process with this property
is to take $X^0(t) = G(W(t))$, where $G\colon \R \to \R$ is given by
\begin{equ}
G(x)  =
\left\{\begin{array}{cl}
	C_+ x & \text{if $x \ge 0$,} \\
	C_- x & \text{otherwise.}
\end{array}\right.
\end{equ}
In turns out that processes of this form do not describe all the possible limiting processes that one can get in the presence of
an interface. The reason why this is so can be seen by comparing the invariant measure of $G(W(t))$ to the invariant measure
of $X^\eps$. Since the invariant measure for $W$ is Lebesgue measure (or multiples thereof), the invariant measure for
$G(W(t))$ is given by
\begin{equ}[e:immu]
\mu(dx) =
\left\{\begin{array}{cl}
	\lambda_+\,dx & \text{if $x > 0$,} \\
	\lambda_-\,dx & \text{otherwise,}
\end{array}\right.\quad \text{with}\quad
\frac{\lambda_+}{\lambda_-} = {C_- \over C_+}\;.
\end{equ}
On the other hand, if we denote the invariant measure for $X^\eps$ by $\mu^\eps = \rho^\eps(x)\,dx$, then
$\rho^\eps$ will typically look as follows:
\begin{equ}
\includegraphics{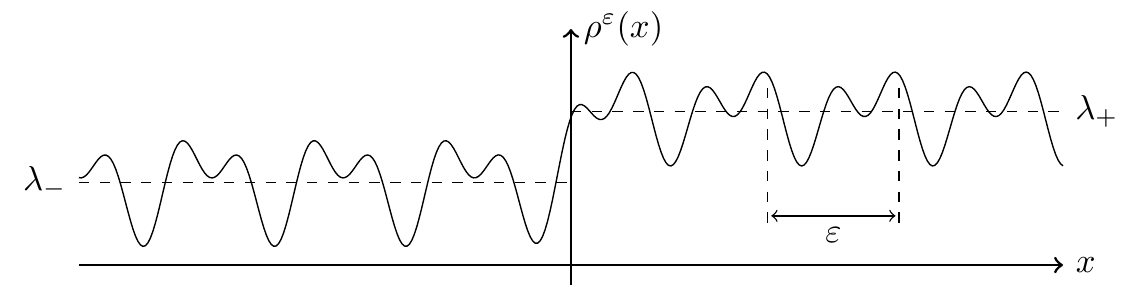}
\end{equ}
It follows that one does indeed have $\mu^\eps \to \mu^0$ as $\eps \to 0$, where $\mu^0$ is of the type \eref{e:immu}, but
the ratio $\lambda_+ / \lambda_-$ depends on the behaviour of $b$, not only away from the interface, but also at the interface.
This can be understood as the process $X^0$ picking up an additional drift, proportional to the local time spent at $0$, that skews
the proportion of time spent on either side of the interface. A Markov process with these properties can be constructed
by applying the function $G$ to a skew-Brownian motion (see for example \cite{MR2280299}) with parameter $p$ for a suitable value of $p$.

An intuitive way of constructing this process goes as follows. First, draw the zeroes of a standard Wiener process on the real
line. These form a Cantor set that partitions the line into countably many disjoint
open intervals. Order them by decreasing length and denote by $I_n$ the length of the $n$th interval.
For each $n \ge 0$, toss an independent biased coin and draw an independent Brownian excursion.
If the coin comes up heads (with probability $p$), fill the interval with the Brownian excursion, scaled horizontally by $I_n$ and
vertically by $C_+\sqrt{I_n}$. Otherwise (with probability $1- p$), fill the interval with the Brownian excursion,
scaled horizontally by $I_n$ and
vertically by $-C_-\sqrt{I_n}$. One can check that the invariant measure for this process is given by \eref{e:immu}, but
with
\begin{equ}[e:relplambda]
{\lambda_+ \over \lambda_-} = {p \,C_- \over (1-p) \,C_+}\;.
\end{equ}
We denote the corresponding process by
$B_{C_\pm,p}(t)$.

This should almost be sufficient to guess the main result of this article. To fix notations, we consider the process $X(t)$
as in \eref{e:defX} and its rescaled version $X^\eps$, and we assume that the drift function $b$ is smooth and
periodic away from an `interface' region $[-\eta,\eta]$. More precisely, we assume that there exist smooth periodic functions $b_{i}:\R\rightarrow\R$, $i \in \{+,-\}$, such that $b_{i}(x+1) = b_{i}(x)$ and such that $b(x) = b_{+}(x-\eta)$ for $x>\eta$
and $b(x) = b_{-}(x+\eta)$ for $x<-\eta$. Additionally, we assume that the functions $b_{i}$ satisfy the centering condition
\begin{equs}
 \int_{0}^{1}b_{i}(x)\,dx = 0\;.
\end{equs}
We also set $V(x) = \int_0^x b(x)\,dx$ for $x \in \R$, so that $\exp(2V(x))\,dx$ is invariant for $X$, and similarly for $V_i$.
Denote by $C_i$ the effective diffusion coefficients for the periodic homogenization
problems corresponding to $b_i$ (see equation \eref{e:defCi} below or \cite{MR2382139} for a more explicit expression).
Define furthermore $\lambda_\pm$ by
\begin{equ}[e:deflambdapm]
\lambda_+ = \int_\eta^{\eta+1} \exp(2V(x))\,dx\;,\qquad
\lambda_- = \int_{-\eta-1}^{-\eta} \exp(2V(x))\,dx\;,
\end{equ}
and let $p \in (0,1)$ be the unique solution to \eref{e:relplambda}. With all these notations
at hand, we have:

\begin{thm}
For any $t>0$, the law of $X^\eps$ converges weakly to the law of $B_{C_\pm,p}$ in the space $\CC([0,t],\R)$.
\end{thm}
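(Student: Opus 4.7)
The natural strategy is to pass to the limit via the martingale problem, viewing the limiting process as a Markov diffusion on a graph with one vertex (the interface) and two infinite edges (the periodic half-lines), exactly as suggested in the introduction. There are three things to verify: tightness of $\{X^\eps\}_{\eps > 0}$ in $\CC([0,t],\R)$, the form of the generator of any subsequential limit on each edge, and the gluing condition at the vertex. Once these are in place, the Freidlin--Wentzell uniqueness result for diffusions on a graph identifies the limit with $B_{C_\pm,p}$, and uniqueness of the subsequential limit upgrades this to weak convergence of the full family.

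Tightness is routine: since $b$ is bounded, It\^o's formula combined with Burkholder--Davis--Gundy yields a fourth-moment bound $\E|X^\eps(t)-X^\eps(s)|^4 \le C|t-s|^2$, so Kolmogorov's criterion applies. Extract a subsequence $X^{\eps_n} \To X^0$ weakly. To identify the generator of $X^0$ on the edge $\{x > 0\}$, fix $f \in C_c^\infty((0,\infty))$ and introduce the standard first-order corrector $\chi_+$ associated with the cell problem for $b_+$; setting $f^\eps(x) = f(x) + \eps \chi_+(x/\eps) f'(x)$ makes the residual $\CL^\eps f^\eps - \frac{C_+^2}{2} f''$ (where $\CL^\eps$ is the generator of $X^\eps$) uniformly small on the support of $f$. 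The usual martingale argument then shows that $X^0$ solves the martingale problem for $\frac{C_+^2}{2}\partial^2_x$ on excursions into $(0,\infty)$, with the symmetric statement on $(-\infty,0)$.

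The main obstacle, and the heart of the proof, is to pin down the gluing condition at the interface. For $B_{C_\pm,p}$ the generator acts on $f \in C^2(\R\setminus\{0\}) \cap C(\R)$ under the skew condition $p\, C_+ f'(0+) = (1-p)\, C_- f'(0-)$; using \eref{e:relplambda} this can equivalently be rewritten as $\lambda_+ C_+^2 f'(0+) = \lambda_- C_-^2 f'(0-)$ up to an overall constant. The strategy for deriving this at the prelimit level is to exploit the explicit scale function of the one-dimensional diffusion $X$: normalizing so that $s(0)=0$, the process $s(X)$ is a local martingale, so for $\delta > 0$ the exit distribution of $X^\eps$ from $(-\delta,\delta)$ is given in closed form by $s$. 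Sending $\eps \to 0$ and then $\delta \to 0$, the ratio of the ``right-exit'' to ``left-exit'' probabilities is governed on one hand by the invariant weights $\lambda_\pm$ from \eref{e:deflambdapm} (which encode the behaviour of $V$ in the interface region) and on the other by the homogenized diffusivities $C_\pm$ (which take over once the process enters the periodic zones). Matching this with the corresponding ratio for skew Brownian motion produces precisely the parameter $p$ of \eref{e:relplambda}. An equivalent route is to construct a single global corrector adapted to the interface by solving a cell-type problem on the whole real line rather than on one period; the boundary-layer contribution near $0$ then directly yields the required skew condition in the martingale problem.

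Putting the pieces together, $X^0$ is a diffusion on the two-edge graph with generator $\frac{C_\pm^2}{2}\partial^2_x$ on each edge and the skew gluing condition above at the vertex. By the Freidlin--Wentzell uniqueness theorem for the associated martingale problem \cite{MR1245308}, this uniquely determines $X^0$ as $B_{C_\pm,p}$, which completes the proof.
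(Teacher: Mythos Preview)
Your overall architecture mirrors the paper's: tightness, identification of the generator on each half-line via correctors, computation of the exit probabilities through the scale function, and then uniqueness of the martingale problem. However, there are three genuine gaps.

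First, your tightness argument is wrong as stated. The drift of $X^\eps$ is $\eps^{-1}b(\cdot/\eps)$, not $b$, so boundedness of $b$ alone does not give $\E|X^\eps(t)-X^\eps(s)|^4 \le C|t-s|^2$ uniformly in $\eps$; equivalently, rescaling requires $\E|X(T)-X(S)|^4 \le C(T-S)^2$ for \emph{large} $T-S$, which is already a homogenisation statement and fails for a generic bounded drift. The paper avoids this by proving an exit-time estimate (Theorem~\ref{xtightness}) that reduces the interface case to the purely periodic one via the strong Markov property; alternatively one can pass to the corrected process $Y^\eps$ first, whose drift is genuinely bounded. Second, you have omitted the key technical step corresponding to condition~\eref{2.2} in the Freidlin--Wentzell framework: you must show that the limit spends zero time at the interface, i.e.\ that $\E\int_0^\infty e^{-\lambda t}\mathbf 1_{(-\delta,\delta)}(Y^\eps(t))\,dt\to 0$ for an appropriate $\delta(\eps)$. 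Without this, the exit-probability computation cannot be glued to the edge generators, and in the paper this is by far the most laborious part (Proposition~\ref{prop:timeint}), handled via a coupling to an explicit comparison process and a resolvent calculation. Third, \cite{MR1245308} does not contain a uniqueness theorem for the limiting martingale problem; their Theorem~4.1 is a convergence statement assuming \eref{2.1}--\eref{2.3}. Uniqueness for the skew generator must be supplied separately, as the paper does in Section~\ref{sec:unique} by transforming the limit into a one-dimensional SDE with piecewise-constant diffusion coefficient, for which pathwise uniqueness is known.
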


\begin{remark}
In order to keep notations simple, we have assumed that the diffusion coefficient of $X$ is constant and equal to $1$.
The case of a non-constant, but smooth and uniformly elliptic diffusion coefficient can be treated in exactly the same way, noting that
it reduces to the case treated here after a time change that can easily be controlled.
\end{remark}

\begin{remark}
A natural extension of the results presented in this article is case of a random potential, along the lines of the situation first considered in 
\cite{MR659505}. While the heuristic argument presented in the introduction still applies, the method of proof considered here does not seem to apply readily.

An alternative method would be to consider an injective harmonic function $H_\eps$ for $X^\eps$, so that $H_\eps(X^\eps(t))$ is
a martingale for every $\eps$. In the situation at hand, one can take
\begin{equ}
H_\eps(x) = \int_0^x e^{2V(y/\eps)}\,dy\;.
\end{equ}
One might hope that in this case it is possible to show that the process $H^\eps(t) = H_\eps(X^\eps(t))$ then converges as $\eps \to 0$
to the martingale $H(t)$ given by
\begin{equ}
H(t) = \int_0^t A(H(s))\,dB(s)\;,
\end{equ}
where $A(H)$ takes one constant value for $H>0$ and a different constant value for $H<0$ (see also Section~\ref{sec:unique} below).
This type of approach has been successfully applied to a number of multiscale problems, 
including homogenisation on fractals, see for example \cite{MR1218305,MR1474004,MR1376343,MR1632945,MR2016606,MR2083710}.
\end{remark}

The proof of the weak convergence of the probability distributions on $\CC[0,\infty)$ associated to $X^{\eps}_{x}$ presented in
this article will rely heavily on the 1993 paper by Freidlin and Wentzell \cite{MR1245308}, in which the authors consider a `fast'
Hamiltonian system perturbed by a `slow' diffusion. Theorems 2.1 and 4.1 from \cite{MR1245308} provide a general framework
for proving
first the tightness and then the convergence of a
family of probability distributions on $\CC[0,\infty)$.

We will start by showing tightness of our family of processes in Section~\ref{sec:tight}.
Once tightness is established, we show in Section~\ref{sec:convergence}
that every limiting process
$X^0$ solves the martingale problem associated to a certain generator. Finally, we show in Section~\ref{sec:unique} that this
martingale problem has a unique solution which is precisely the rescaled skew-Brownian motion, thus concluding the proof.

\subsection*{Acknowledgements}

{\small
We would like to thank the anonymous referees for carefully reading a preliminary version of this article and 
encouraging us to shorten some of our arguments considerably. The research of MH was supported by 
an EPSRC Advanced Research Fellowship (grant number EP/D071593/1) and a Royal Society Wolfson Research Merit Award.
}

\section{Proving tightness}
\label{sec:tight}

The main result of this section is the following:
\begin{thm}\label{xtightness}
The family of probability measures on $\CC([0,\infty),\R)$ given by the laws of $X^{\eps}_{x}$ for $\eps \in (0,1]$ is tight.
\end{thm}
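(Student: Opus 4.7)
The plan is to sidestep the $O(\eps^{-1})$ drift carried by $X^\eps$ by exploiting the natural scale function of the one-dimensional diffusion $X$, which expresses $X^\eps$ as a uniformly bi-Lipschitz image of a genuine martingale. First, I would set $V(x) = \int_0^x b(y)\,dy$ and observe that, under the hypotheses on $b$, the potential $V$ is globally bounded on $\R$: outside the interface one has $V(x) = V(\pm\eta) + V_\pm(x \mp \eta)$ with $V_\pm$ periodic thanks to the centering conditions, while on the compact interval $[-\eta,\eta]$ the function $V$ is smooth. Define the scale function $S(x) = \int_0^x e^{-2V(y)}\,dy$ and its rescaled version $\tilde S_\eps(x) := \eps S(x/\eps)$. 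The derivative $\tilde S_\eps'(x) = e^{-2V(x/\eps)}$ takes values in the fixed positive interval $[e^{-2\norm{V}_\infty}, e^{2\norm{V}_\infty}]$ for every $\eps$ and $x$, so each $\tilde S_\eps$ is bi-Lipschitz with constants independent of $\eps$.

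The key point is that $S$ lies in the kernel of the generator $\CL = \frac12 \d^2 + b\,\d$ of $X$, and a direct computation shows that the rescaled generator $\CL_\eps$ of $X^\eps$ satisfies $\CL_\eps \tilde S_\eps \equiv 0$ as well. Hence by It\^o's formula
\[
M_\eps(t) := \tilde S_\eps(X^\eps(t)) - \tilde S_\eps(X^\eps(0))
\]
is a continuous martingale with quadratic variation $\int_0^t e^{-4V(X^\eps(u)/\eps)}\,du$, which is uniformly comparable to $t$ in $\eps$. The Burkholder-Davis-Gundy inequality therefore yields $\E|M_\eps(t) - M_\eps(s)|^p \le C_p |t-s|^{p/2}$ with $C_p$ independent of $\eps$; taking $p = 4$ and noting that the starting points $\tilde S_\eps(x) = \eps S(x/\eps)$ remain bounded as $\eps \to 0$, Kolmogorov's tightness criterion delivers tightness of the laws of $\tilde S_\eps(X^\eps)$ in $\CC([0,T],\R)$ for every $T>0$.

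Since $\tilde S_\eps^{-1}$ is Lipschitz with constant $e^{2\norm{V}_\infty}$ uniformly in $\eps$, the Kolmogorov bound transfers immediately from $\tilde S_\eps(X^\eps)$ to $X^\eps$, giving the desired tightness. The only point that genuinely requires care is the uniform boundedness of $V$, which is where the explicit description of $b$ across and away from the interface enters; beyond that no serious obstacle arises. An alternative approach closer in spirit to the Freidlin-Wentzell framework cited in the paper would apply It\^o's formula to $x \mapsto x + \eps\chi(x/\eps)$, where $\chi$ is a bounded smooth extension to $\R$ of the periodic correctors solving $\frac12 \chi_\pm'' + b_\pm \chi_\pm' = -b_\pm$; one would then have to absorb an $O(\eps^{-1})$ drift supported on the shrinking interface $[-\eps\eta,\eps\eta]$ via an occupation-time argument, a step that the scale-function approach avoids entirely.
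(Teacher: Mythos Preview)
Your proof is correct and takes a genuinely different route from the paper. The paper establishes tightness via the Stroock--Varadhan exit-time criterion $\Prob_{x,\eps}(\tau_\rho^x \le \delta) \le A_\rho \delta$, which it verifies by a case analysis: for $x$ outside a $\rho$-neighbourhood of the interface it quotes the periodic result from \cite{lions}; for $\eps$ bounded away from $0$ it uses a standard martingale argument; and for the remaining regime (small $\eps$, $x$ near the interface) it restarts via the strong Markov property at points where the periodic estimate applies. Your argument instead exploits the one-dimensional scale function: because $V$ is globally bounded, $\tilde S_\eps$ is bi-Lipschitz uniformly in $\eps$, so $\tilde S_\eps(X^\eps)$ is a martingale with quadratic variation density bounded above and below by fixed constants, and BDG plus Kolmogorov gives tightness in one stroke. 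This is cleaner and entirely self-contained---it avoids both the case split and the appeal to the periodic literature---and it is exactly the harmonic-function idea the authors themselves mention in the remark following Theorem~1.1 (though there they raise it for the full convergence rather than tightness). The price is that your approach is intrinsically one-dimensional: the paper's exit-time argument, while less elegant here, is the one that survives in the multidimensional companion paper \cite{MultiD}, where no global scale function is available.
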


\begin{proof}
Given that the initial condition is kept fixed at one single point across the entire family of laws,
tightness follows from uniformity in the modulus of continuity over $\eps \in (0,1]$.

For $x \in \R$ and $\rho > 0$, denote by $\tau_\rho^x$ the first exit time of the canonical process from the interval
$[x-\rho, x+\rho]$. We also denote by $\Prob_{x,\eps}$ the law of $X^\eps_x$.
It then follows immediately from the proof of \cite[Theorem 1.4.6]{MR532498} that a sufficient criterion for tightness is that,
for every $\rho > 0$, there exists a constant $A_\rho$ such that the bound
\begin{equ}[delta]
	\Prob_{x,\eps}(\,\tau_\rho^x \leq \delta) \leq \delta A_{\rho} \;,
\end{equ}
holds uniformly over all $x\in\R$, $\eps \in (0,1]$, and $\delta \in \R$. Before we proceed, we note the following two crucial facts:
\begin{enumerate}
\item \label{1} It follows from the periodic case \cite[Section~3.4]{lions} that 
(\ref{delta}) holds uniformly for $x \notin (-\eps\eta-\rho,\eps\eta+\rho)$. We denote the corresponding constants by $A_\rho^1$.
\item  A standard martingale argument as in \cite[Section~1.4]{MR532498} shows that, for every $\eps_0 > 0$, the bound \eref{delta} 
holds uniformly over all $x\in \R$,
provided that we restrict ourselves to $\eps \in [\eps_0, 1]$. For the sequel of the proof it will be convenient to make the choice
$\eps_0 = \rho/(4\eta)$ and we denote the corresponding constants by $A_\rho^2$.
\end{enumerate}

Combining these two facts, we see that it remains to find a family of constants $A_\rho^3$ such that \eref{delta} holds for
$x \in (-\eps\eta-\rho,\eps\eta+\rho)$ and for $\eps < \rho/(4\eta)$. At this stage we note that since $\rho$ is greater than twice
the width of the interface, for every $x \in \R$ there exist two points $\tilde x_{\pm}$ with the following two properties:
\begin{enumerate}
\item The process started at $x$ has to hit either $\tilde x_+$ or $\tilde x_-$ before
it can reach the boundary of the interval $[x-\rho, x+\rho]$.
\item The intervals $I_\pm = [\tilde x_\pm - {\rho\over 8}, \tilde x_\pm + {\rho\over 8}]$ satisfy 
$I_\pm \cap (-\eta\eps, \eta\eps) = \emptyset$ and $I_\pm \subset [x-\rho, x+\rho]$. 
\end{enumerate}
Restarting the process when it hits one of the $\tilde x_\pm$, it follows from the strong Markov property that 
we can choose $A_\rho^3 = A^{1}_{\rho/8}$, which concludes the proof by setting $A_\rho = \max\{A_\rho^i\}$.
\end{proof}

\section{Convergence of the laws}
 \label{sec:convergence}

As usual in the theory of homogenization, we do not show directly that the processes $X^\eps$ converge to a limit, 
but we first introducing a compensator $g \colon \R \to \R$ that `kills' the strong drift of the rescaled
process and consider instead the family of processes
 \begin{equ}[e:defY]
 Y^{\eps}(t) = X^{\eps}(t) + \eps g \Bigl( \frac{X^{\eps}(t)}{\eps} \Bigr)\;.
 \end{equ}
Since we will choose $g$ to be a bounded function, the weak convergence in the space of continuous functions
of the laws of $X^\eps$ to some limiting process is equivalent to that of the $Y^\eps$.

In order to construct $g$, let $\CL_i$ denote the generator of the diffusion with drift $b_i$, that is $\CL_i = {1\over 2 }\d_x^2 + b_i(x)\d_x$,
and denote by $\mu_i(dx) = Z^{-1}\exp(2V_i(x))\,dx$ the corresponding invariant probability measure on $[0,1]$.
We then denote by $g_i$ the unique smooth function solving
\begin{equ}[e:defgi]
\CL_i g_i = -b_i\;,\quad \int_0^1 g_i(x)\,\mu_i(dx) = 0\;.
\end{equ}
Since $b$ is assumed to be centred on either side of the interface, such a function exists (and is unique) by the 
Fredholm alternative.
We now choose \textit{any} smooth function $g\colon \R \to \R$ such that $g(x) = g_-(x+\eta)$ for $x \in (-\infty, -\eta)$ and 
$g(x) = g_+(x-\eta)$ for $x\in(\eta, \infty)$, with a smooth joining region in between.

The main ingredient in our proof of convergence will be
\cite[Theorem~4.1]{MR1245308}, which is used in conjunction with the previous tightness result to identify
the weak limit points of the family of probability distributions as the solutions to a martingale problem.
The aim of this section is to explain how to fit our problem into the framework of \cite{MR1245308} and to verify the
assumptions of their main convergence theorem.

Before we proceed, let us recall what is understood by the ``martingale problem'' corresponding to some operator $A$
(see for example \cite{MR838085}), and let us try to guess what the generator $A$ for the limiting process is expected to be.
Let $X$ be a Polish (i.e.\ complete separable metric) space; $\CC[\,0, \infty)$, the space of all continuous functions on $[0, \infty)$
with values in X. For any subset $I \subset [0,\infty)$, denote by $\mathscr{F}_{I}$ the $\sigma$-algebra of
subsets of $\CC[\,0, \infty)$ generated by the sets $\{x \in \CC[\,0, \infty): x(s) \in B\}$, where $s \in I$ and $B\subset X$ is
an arbitrary Borel set. We also denote by $\CC(X)$
    the space of all continuous real-valued functions on $X$.

   Let $A$ be a linear operator on $\CC(X)$, defined on a subspace $\cD(A) \subseteq \CC(X)$. We will say that a probability measure $\mathbb{P}$, on $\bigl(\CC[\,0, \infty), \mathscr{F}_{[\,0,\, \infty)}\bigr)$, is a solution to the martingale problem corresponding to $A$, starting from a point $x_{0} \in X$, if
\begin{equ}
	\mathbb{P}\{x\,:\, x(0) = x_{0}\} = 1 \label{3.2}
\end{equ}
and, for any $f \in \cD(A)$, the random function defined on the probability space $\bigl(\CC[\,0, \infty), \mathscr{F}_{[\,0,\, \infty)}, \mathbb{P}\bigr)$ by
\begin{equ}
    f\bigl(x(t)\bigr)  - \int_{0}^{t}\bigl(Af\bigr)\bigl(x(s)\bigr)\,ds, \text{ $t \in [\,0, \infty)$}\;, \label{3.1}
\end{equ}
is a martingale with respect to the filtration $\bigl\{\mathscr{F}_{[0,t]}\bigr\}_{t > 0}$.

What do we expect the operator $A$ to be given by in our case? On either side of the interface, we argued in the introduction that the limiting
process should be given by Brownian motion, scaled by factors $C_\pm$ respectively. Therefore, one would expect $A$ to be given
by
\begin{equ}[e:defA]
\bigl(Af\bigr)(x) =
\left\{\begin{array}{cl}
	{1\over 2} C_-^2 \d_x^2 f(x) & \text{if $x < 0$,} \\[0.4em]
	{1\over 2} C_+^2 \d_x^2 f(x) & \text{otherwise,}
\end{array}\right.
\end{equ}
and the domain $\cD(A)$ to contain functions that are $\CC^2$ away from the origin. This however does not take into account
for the ``skewing'', which should be encoded in the behaviour of functions in $\cD(A)$ at the origin.

Since the limiting process spends zero time at the origin (the invariant measure is continuous with respect to Lebesgue measure),
it was shown in \cite{MR2280299} that the possible behaviours at the origin are given by matching conditions for the first derivatives of functions
belonging to $\cD(A)$. We know from the introduction that the invariant measure of the limiting process is proportional to Lebesgue measure
on either side of the origin, with proportionality constants $\lambda_\pm$. We should therefore have the identity
\begin{equ}
\lambda_- \int_{-\infty}^{0}Af(x)\,dx + \lambda_+ \int_{0}^{\infty}Af(x)\,dx = 0\;,
 \end{equ}
for every function $f \in \cD(A)$. Using \eref{e:defA}, we thus obtain
 \begin{equ}
\lambda_-  C_{-}^{2} \int_{-\infty}^{0}f''(x)\,dx + \lambda_+ C_+^2 \int_{0}^{\infty}f''(x)\,dx = 0\;.
 \end{equ}
Integrating by parts, this yields (for say compactly supported test functions $f$) the condition
 \begin{equ}[e:dcon]
\lambda_- C_{-}^{2}f'(0^{-}) = \lambda_+ C_{+}^{2}f'(0^{+})\;.
 \end{equ}
This is exactly the general form of a generator produced by Theorem 4.1 in Freidlin and Wentzell \cite{MR1245308}, a differential operator on the regions away from some distinguished points termed nodes, combined with a restriction on the ratios of the limits of
the derivatives at this point.

The main theorem of this section that is also very closely related to the main theorem of the article is as follows:
\begin{thm}\label{Main Theorem of Section 3}
Let $C_\pm$ be given by
 \begin{equ}[e:defCi]
 C_{\pm}^2=\int_{0}^{1}(1+g_{\pm}(x))^{2}\,\mu_{\pm}(dx)\;,
 \end{equ}
 where $g_\pm$ and $\mu_\pm$ are as in \eref{e:defgi}. Let $A$ be given by \eref{e:defA} and let $\cD(A)$ be the set of continuous functions $f \colon \R \to \R$, vanishing at infinity, that are $\CC^2$ away from $0$ and that satisfy the condition \eref{e:dcon} at the
 origin.

Then, every limit point of the family of processes $Y^{\eps}_{x}$ is solution to the martingale problem corresponding to $A$.
\end{thm}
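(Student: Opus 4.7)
The plan is to fix $f \in \cD(A)$, take any subsequential weak limit $X^0$ of $Y^\eps$ furnished by Theorem~\ref{xtightness}, and verify that $f(X^0(t)) - \int_0^t (Af)(X^0(s))\,ds$ is a martingale with respect to the canonical filtration. Writing $h(y) = y + g(y)$ and $Z^\eps(t) = X(t/\eps^2)$ so that $Y^\eps(t) = \eps h(Z^\eps(t))$, a direct application of It\^o's formula gives
\begin{equ}
 dY^\eps(t) = \eps^{-1}\Bigl[(1+g'(Z^\eps))\,b(Z^\eps) + \tfrac{1}{2} g''(Z^\eps)\Bigr]dt + (1+g'(Z^\eps))\,dW(t)\;.
\end{equ}
The defining relation $\CL_\pm g_\pm = -b_\pm$ from \eref{e:defgi} is precisely what makes the bracket vanish identically on the two periodic regions $\{|X^\eps|>\eps\eta\}$, so the potentially dangerous $\eps^{-1}$ drift only survives inside the interface region $[-\eps\eta,\eps\eta]$. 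This is the whole point of introducing the compensator $g$.

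\textbf{Homogenisation away from the interface.} Applying It\^o to $f(Y^\eps(t))$ (first for a version of $f$ smoothed across the origin, and removing the smoothing at the end via a limiting argument), on each open half line the problem reduces to a classical periodic homogenisation problem. By the standard corrector / cell-problem argument (\cite{lions,MR2382139}), the time averages of $\bigl(1+g'(X^\eps(s)/\eps)\bigr)^2 \psi(Y^\eps(s))$ converge in probability to $C_\pm^2 \int_0^t \psi(X^0(s))\,ds$ for any continuous $\psi$, with $C_\pm^2$ given by \eref{e:defCi}. Combined with the drift cancellation above, this recovers the $\tfrac{1}{2}C_\pm^2 \partial_x^2$ part of $A$ in the limit on each side of $0$.

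\textbf{The gluing condition at the origin --- the main obstacle.} The core difficulty is to absorb two interrelated contributions that a naive calculation leaves over near the origin: the $O(\eps^{-1})$ residual drift supported inside $[-\eps\eta,\eps\eta]$, and a spurious local-time-type contribution at $0$ stemming from the jump of $f'$ (recall $f \in \cD(A)$ need not be $\CC^1$ at $0$). To handle both together I would appeal to the graph-diffusion framework of Freidlin and Wentzell \cite[Thm~4.1]{MR1245308}: view $Y^\eps$ as a perturbed diffusion on the graph consisting of one node (the interface, shrinking to a point) and two infinite edges, and analyse complete excursions of $X^\eps$ between successive exits from the interface region. Using the explicit invariant density $\exp(2V(x))\,dx$ near the interface, one computes that an excursion starting inside $[-\eps\eta,\eps\eta]$ exits to the right versus to the left with probabilities proportional to $\lambda_-$ versus $\lambda_+$ as defined in \eref{e:deflambdapm}. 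Feeding these hitting-ratios into the Freidlin-Wentzell gluing formula and pairing with the prescribed slopes $f'(0^\pm)$, the offending contributions cancel in the limit \emph{exactly} when \eref{e:dcon} holds. Verifying the averaging hypotheses of \cite[Thm~4.1]{MR1245308} in our geometry --- ergodicity of the fast subsystem on either side, uniform integrability of the exit times from $[-\eps\eta,\eps\eta]$, and convergence of hitting distributions on its boundary --- is the technical heart of the argument and the step I expect to require the most care.
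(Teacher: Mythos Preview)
Your overall strategy---reduce to the Freidlin--Wentzell graph theorem \cite[Thm~4.1]{MR1245308} and then verify its hypotheses---is exactly what the paper does. The decomposition into ``away from the interface'' (their condition \eref{2.1}, handled by a corrector/averaging lemma) and ``gluing at the node'' (their conditions \eref{2.2} and \eref{2.3}) is the same.

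However, your computation of the exit probabilities is wrong in a way that would yield the wrong gluing condition. You write that an excursion starting inside $[-\eps\eta,\eps\eta]$ exits right versus left with probabilities ``proportional to $\lambda_-$ versus $\lambda_+$'', inferred from the invariant density $e^{2V}$. First, the relevant quantity in \cite[Thm~4.1]{MR1245308} is the exit distribution from a \emph{mesoscopic} interval $(-\delta,\delta)$ with $\eps \ll \delta \to 0$ (the paper takes $\delta = \sqrt{\eps}$), not from the microscopic interface $[-\eps\eta,\eps\eta]$; only at that scale do the probabilities become independent of the starting point. Second, exit probabilities are governed by the scale function $s(x)=\int^x e^{-2V}$, not by the invariant density $e^{2V}$. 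Carrying this out (Proposition~\ref{prop:exit} in the paper) gives
\[
\frac{p_+}{p_-} \;=\; \frac{\lambda_+ C_+^2}{\lambda_- C_-^2}\;,
\]
because $\int_0^N e^{-2V} \sim N/(C_+^2\lambda_+)$ on the right and similarly on the left, via the identity $C_\pm^{-2} = \int_0^1 e^{-2V_\pm}\int_0^1 e^{2V_\pm}$. Plugging your ratio $\lambda_-/\lambda_+$ into the Freidlin--Wentzell matching condition $p_+ f'(0^+)=p_- f'(0^-)$ does \emph{not} reproduce \eref{e:dcon}; the $C_\pm^2$ factors are essential and come precisely from the scale-function calculation.

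A secondary gap: your list of hypotheses (``ergodicity'', ``uniform integrability of exit times'', ``hitting distributions'') does not include the occupation-time estimate \eref{2.2}, namely that $\E_y\int_0^\infty e^{-\lambda t}1_{(-\delta,\delta)}(Y^\eps(t))\,dt\to 0$ uniformly. In the paper this is the most laborious verification (Proposition~\ref{prop:timeint}), done by a time-change followed by pathwise comparison with an explicit auxiliary diffusion whose resolvent is computed by hand; it is not implied by integrability of exit times from $[-\eps\eta,\eps\eta]$.
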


As already mentioned, our main ingredient is \cite[Theorem 4.1]{MR1245308} applied to the sequence of processes $Y^{\eps}$
as defined in \eref{e:defY}. For completeness, we give a simplified statement of this result adapted to the situation at hand:

 \begin{thm}[Freidlin \& Wentzell] \label{4111}
 Let $\CL_{i}$, $i = \pm$, be elliptic second order differential operators with smooth coefficients on
 $I_{i}$, $I_{+}=[0,\infty)$, $I_{-}=(-\infty,0]$, and let $Y^{\eps}$
 be a family of real-valued processes satisfying the strong Markov property. For some fixed $\tilde\eta>0$, let  $\tau^{\eps}$
 be the first hitting time of the set $(-\eps\tilde\eta,\eps\tilde\eta)$ by $Y^{\eps}$.

 Assume that there exists a function $k \colon\R_+ \to \R_+$ with $\lim_{\eps \rightarrow 0}k(\eps) = 0$ such that,
  for any function $f \in \CC^\infty_v(I_i)$ and for any $\lambda > 0$, one has the bound\footnote{We denote by $\CC_v^\infty$ the space
  of smooth functions that vanish at infinity, together with all of their derivatives.}
 \begin{equ}
 \mathbb{E}_{y}\biggl[\,e^{-\lambda \tau^{\eps}}f\bigl(Y^{\eps}(\tau^{\eps})\bigr) - f\bigl(y)+ \int_{0}^{\tau^{\eps}}e^{-\lambda t}\Bigl(\lambda f\bigl(Y^{\eps}(t)\bigr) - \CL_{i}f\bigl(Y^{\eps}(t)\bigr)\Bigr)\,dt\,\biggr]
 = \CO\bigl(k(\eps)\bigr) \label{2.1}
 \end{equ}
 as $\eps \rightarrow 0$, uniformly with respect to $y \in I_{i}$.
Assume furthermore that there exists a function $\delta \colon\R_+ \to \R_+$ with  $\lim_{\eps \rightarrow 0}\delta(\eps) = 0$
and $\lim_{\eps \rightarrow 0} \delta(\eps)/k(\eps) \rightarrow \infty$ such that, for any $\lambda > 0$,
  \begin{equ}
  \mathbb{E}_{y} \biggl[\int_{0}^{\infty}e^{-\lambda t} 1_{(-\delta\,,\,\delta)}\bigl(Y^{\eps}(t)\bigr)\,dt \,\biggr]\rightarrow 0 \label{2.2}
   \end{equ}
  as $\eps \rightarrow 0$, uniformly over all $y \in R$. Finally, writing $\sigma^{\delta}$ for the first hitting time of the
  set $(-\infty,-\delta)\cup(\delta,\infty)$ by $Y^\eps$, assume that there exist $p_i\ge 0$ with $p_{-} + p_+ = 1$ such that
  \begin{equ}
  	\mathbb{P}_{y}\bigl[Y^{\eps}(\sigma^{\delta}) \in I_{i}\bigr] \rightarrow p_{i}\;,\quad i\in \{+,-\} \;, \label{2.3}
  \end{equ}
  uniformly for $y \in (-\eps\tilde\eta,\eps\tilde\eta)$.

Let now $A$ be the operator defined by $Af(x) =  \CL_{i}f(x)$ for $x \in I_i$ with domain $\cD(A)$ consisting of functions $f$ such that $f|_{I_i} \in \CC^\infty_v(I_i)$ and such that the `matching condition' $p_+ f'(0^+) = p_- f'(0^-)$ holds.
Then for any fixed $t_{0} \geq 0$, any $\lambda > 0$, and any $f \in \cD(A)$, the bound
  \begin{equ}[e:resultconv]
  \text{\rm ess sup} \,\biggl|\, \int_{t_{0}}^{\infty}e^{-\lambda t}\mathbb{E}_{y}\Bigl[\,\lambda f \bigl(Y^{\eps}(t)\bigl) - Af\bigl(Y^{\eps}(t)\bigr)\Bigr|\, \mathscr{F}_{[\,0,\,t_{0}]} \Bigr]\,dt - e^{-\lambda t_{0}}f\bigl(Y^{\eps}(t_{0})\bigr)\biggr| \rightarrow 0 \label{2.4}
  \end{equ}
holds  as $\eps \rightarrow 0$, uniformly for all $y\in \R$.
\end{thm}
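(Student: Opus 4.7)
The plan is to establish the near-resolvent identity $\sup_{y\in\R}|\Phi^\eps(y)-f(y)|\to 0$, where $\Phi^\eps(y) := \int_0^\infty e^{-\lambda s}\mathbb{E}_y[\lambda f(Y^\eps(s))-Af(Y^\eps(s))]\,ds$; by time-homogeneity and strong Markovianity, the expression inside the essential supremum in \eref{2.4} equals $e^{-\lambda t_0}(\Phi^\eps(Y^\eps(t_0))-f(Y^\eps(t_0)))$, so uniform smallness of $\Phi^\eps-f$ implies \eref{2.4}.

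To estimate $\Phi^\eps(y)$ for $|y|\ge\delta$ (initial points in $(-\delta,\delta)$ being handled by a mild variant using \eref{2.2}--\eref{2.3}), I decompose the trajectory using the stopping times $\sigma_0=0$, $\rho_n=\inf\{t\ge\sigma_n:Y^\eps(t)\in(-\eps\tilde\eta,\eps\tilde\eta)\}$, and $\sigma_{n+1}=\inf\{t\ge\rho_n:|Y^\eps(t)|\ge\delta\}$. Each ``outer'' excursion $[\sigma_n,\rho_n]$ stays in one $I_i$ (where $A=\CL_i$), so the strong Markov property at $\sigma_n$ combined with hypothesis \eref{2.1} gives a contribution $\mathbb{E}_y[e^{-\lambda\sigma_n}f(Y^\eps(\sigma_n))]-\mathbb{E}_y[e^{-\lambda\rho_n}f(Y^\eps(\rho_n))]+\CO(k(\eps))\mathbb{E}_y[e^{-\lambda\sigma_n}]$ to $\Phi^\eps(y)$, while each ``inner'' excursion $[\rho_n,\sigma_{n+1}]$ contributes at most $\|\lambda f-Af\|_\infty$ times the integrand of \eref{2.2}. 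Invoking strong Markov at $\rho_n$ to identify $\mathbb{E}_y[e^{-\lambda\rho_n}\mathbb{E}_{Y^\eps(\rho_n)}[e^{-\lambda\sigma^\delta}f(Y^\eps(\sigma^\delta))]]=\mathbb{E}_y[e^{-\lambda\sigma_{n+1}}f(Y^\eps(\sigma_{n+1}))]$, the boundary terms telescope and I obtain $\Phi^\eps(y)=f(y)+\sum_{n\ge 0}\mathbb{E}_y[e^{-\lambda\rho_n}R^\eps(Y^\eps(\rho_n))]+E^\eps(y)$, where $R^\eps(z):=\mathbb{E}_z[e^{-\lambda\sigma^\delta}f(Y^\eps(\sigma^\delta))]-f(z)$ is the ``interface correction'' and $E^\eps$ collects the two residual error types above.

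The matching condition built into $\cD(A)$ enters precisely at this stage. For $z\in(-\eps\tilde\eta,\eps\tilde\eta)$, continuity of the paths forces $Y^\eps(\sigma^\delta)\in\{\pm\delta\}$, so Taylor-expanding $f$ from each side and using continuity of $f$ at $0$ together with \eref{2.2}--\eref{2.3} yields $R^\eps(z)=\delta(p_+f'(0^+)-p_-f'(0^-))+\CO(\delta^2)+\CO(\eps)+o(1)$; the leading $\delta$-term vanishes precisely because $p_+f'(0^+)=p_-f'(0^-)$. The hard part will be the bookkeeping in the final summation: one must show that the Laplace-weighted count $\sum_n\mathbb{E}_y[e^{-\lambda\rho_n}]$, when multiplied by the per-visit residual $\CO(\delta^2)+\CO(\eps)+o(1)$ and combined with the per-outer-excursion error $\CO(k(\eps))$, tends to zero uniformly in $y\in\R$. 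This is where the scaling assumption $\delta(\eps)/k(\eps)\to\infty$ plays its role, in combination with a diffusive lower bound on the duration of outer excursions (which must traverse a distance of at least $\delta-\eps\tilde\eta$).
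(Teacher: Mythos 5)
First, a point of comparison: the paper does not prove Theorem~\ref{4111} at all --- it is quoted, in simplified form, from Freidlin and Wentzell \cite{MR1245308}, so your attempt can only be measured against their argument. Your skeleton is indeed the right one and matches theirs in structure: reducing \eref{2.4} to the uniform resolvent estimate $\sup_y|\Phi^\eps(y)-f(y)|\to 0$ via the Markov property at time $t_0$, decomposing the trajectory into outer excursions (where \eref{2.1} applies edgewise), interface crossings (where \eref{2.2}, \eref{2.3} and the matching condition act), and telescoping the boundary terms through the strong Markov property.

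The genuine gap is in the step you defer as ``the hard part'', and as you have set it up it does not close. Uniformly in $y$, the Laplace-weighted number of interface visits $\sum_n\E_y[e^{-\lambda\rho_n}]$ is genuinely of order $1/\delta$ (already for Brownian motion started at distance $\delta$ one has $\E[e^{-\lambda T_0}]=1-\CO(\delta)$), so a per-visit residual containing an unquantified $o(1)$ --- which is all that \eref{2.2}--\eref{2.3} yield when you pair the probability error with $\|f\|_\infty$, as your expansion of $R^\eps$ does --- cannot be summed: $o(1)\cdot\CO(1/\delta)$ need not vanish, and the hypotheses give no rates. Two repairs are needed. (i) Pair the exit-probability discrepancy with the increment of $f$: since $\Prob_z[+]+\Prob_z[-]=1=p_++p_-$, one has $\E_z[f(Y^\eps(\sigma^\delta))]-p_+f(\delta)-p_-f(-\delta)=(\Prob_z[+]-p_+)\bigl(f(\delta)-f(-\delta)\bigr)=o(1)\cdot\CO(\delta)$, i.e.\ $o(\delta)$ per visit; and do not estimate the $(1-e^{-\lambda\sigma^\delta})$ correction per visit, but telescope it over all cycles, using $e^{-\lambda\rho_n}-e^{-\lambda\sigma_{n+1}}=\lambda\int_{\rho_n}^{\sigma_{n+1}}e^{-\lambda t}\,dt$ with $Y^\eps(t)\in[-\delta,\delta]$ there, into a single global application of \eref{2.2}. (ii) The counting lemma itself: your ``diffusive lower bound on the duration of outer excursions'' is not among the hypotheses --- $Y^\eps$ is an abstract strong Markov family --- and must be extracted from \eref{2.1}, e.g.\ by applying it on $I_i$ to a bounded test function $u$ with $\lambda u\ge \CL_i u$, $u(0)=1$ and $u'(0^\pm)\neq 0$, giving $\E_z[e^{-\lambda\tau^\eps}]\le 1-c\delta+\CO(k(\eps)+\eps)$ for $|z|\ge\delta$; this is precisely where $\delta/k\to\infty$ enters, and it then yields $\sum_n\E_y[e^{-\lambda\rho_n}]\le C/\delta$. (A smaller point: your per-visit $\CO(\eps)$ term, from $f(z)-f(0)$ with $|z|\le\eps\tilde\eta$, requires $\eps/\delta\to0$, which holds in the application where $\delta=\sqrt\eps$ but is not stated abstractly.) With these repairs the error budget closes as $\CO(k/\delta)+\CO(\delta)+\CO(\eps/\delta)+o(1)$; without them the argument stalls exactly where you flag it.
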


\begin{remark}
The version of Theorem~\ref{4111} stated in \cite{MR1245308} does actually treat more general diffusions on graphs, but assumes
that the edges of the graph are finite. This is not really a restriction, since $I_+$ is in bijection with $[0,1)$ (and similarly for $I_-$) and we can
simply add non-reachable vertices at $\pm 1$ to turn our process into a process on a finite graph.
\end{remark}

\begin{remark}
As can be seen by combining \eref{e:relplambda} and \eref{e:dcon}, the probabilities $p_\pm$ appearing in the statement
of Theorem~\ref{4111} are not quite the same in general as the probabilities $\tilde p_\pm = \{p,1-p\}$ appearing in the construction of skew Brownian
motion in the introduction. The relation between them is given by ${p C_+ \over (1-p) C_-} = {p_+ \over p_-}$.
The reason is that $p_\pm$ give the respective probabilities of hitting two points located at a \textit{fixed} distance from the `interface', whereas the
non-trivial scaling of the Brownian bridges on either side of the interface means that $\tilde p_\pm$ give the probabilities of hitting
two points whose distances from the interface have the ratio $C_+ / C_-$.
\end{remark}


Most of the remainder of this section is devoted to the fact that:

\begin{proposition}\label{prop:assum}
The family of processes $Y^\eps$ given by \eref{e:defY} satisfies the assumptions of Theorem~\ref{4111} with
$\CL_\pm = {1\over 2} C_\pm \d_x^2$ and $p_\pm$ defined by the relations
\begin{equ}
p_+ + p_- = 1\;,\qquad {p_+\over p_-} = {\lambda_+ C_+^2 \over \lambda_- C_-^2}\;,
\end{equ}
and $\lambda_\pm$ as in \eref{e:deflambdapm}.
\end{proposition}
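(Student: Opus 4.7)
The plan is to verify the three hypotheses of Theorem~\ref{4111} in turn. Applying Itô's formula to $Y^\eps(t) = X^\eps(t) + \eps g(X^\eps(t)/\eps)$ and using the SDE for $X^\eps$ shows that, whenever $|X^\eps(t)|/\eps > \eta$, the defining relation $\CL_i g_i = -b_i$ exactly cancels the $\CO(\eps^{-1})$ drift and leaves
\begin{equ}
dY^\eps(t) = \bigl(1 + g_i'(X^\eps(t)/\eps)\bigr)\,dB(t)\;,
\end{equ}
with $i\in\{+,-\}$ depending on the sign of $X^\eps$. In this region the generator of $Y^\eps$ acts on smooth $f$ by $\tfrac12(1+g_i'(X^\eps/\eps))^2 f''(Y^\eps)$, and the oscillatory prefactor has $\mu_i$-mean equal to $C_i^2$, which is the value one hopes emerges after averaging.

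The bulk of the work is condition~\eref{2.1}. For $f\in\CC_v^\infty(I_i)$, applying Itô to $e^{-\lambda t}f(Y^\eps(t))$ between times $0$ and $\tau^\eps$ (during which $X^\eps/\eps$ stays in the fully periodic region) shows that the quantity in \eref{2.1} equals
\begin{equ}
\tfrac12\,\E_y\Bigl[\int_0^{\tau^\eps}e^{-\lambda t}\bigl\{(1+g_i'(X^\eps/\eps))^2-C_i^2\bigr\}\,f''(Y^\eps)\,dt\Bigr]\;.
\end{equ}
To bound this by $\CO(\eps)$, I would introduce a second corrector $h_i$ solving $\CL_i h_i=(1+g_i')^2-C_i^2$ on $[0,1]$ with zero $\mu_i$-mean, which exists and is bounded by the Fredholm alternative. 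A further application of Itô to $\eps^2 h_i(X^\eps/\eps)\,f''(Y^\eps)\,e^{-\lambda t}$ then exhibits the oscillatory integrand above as the $dt$-part of that semimartingale, plus cross-variation terms involving $f'''$ and a boundary term at time $\tau^\eps$, both of size $\CO(\eps)$ because $h_i$, $g_i'$ and the derivatives of $f$ are bounded. This delivers $k(\eps)=\CO(\eps)$.

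For condition~\eref{2.2}, since $Y^\eps$ and $X^\eps$ differ by $\CO(\eps)$, it suffices to bound $\E_y\int_0^\infty e^{-\lambda t}1_{(-2\delta,2\delta)}(X^\eps(t))\,dt$ uniformly in $y$; standard occupation-time estimates, combined with the fact that the invariant density $e^{2V}$ is bounded on compacts, yield a bound of order $\delta$, so taking $\delta(\eps)=\sqrt{\eps}$ satisfies $\delta(\eps)/k(\eps)\to\infty$. For condition~\eref{2.3}, the strong Markov property reduces the computation to exit probabilities from a long interval for the original microscopic process $X$: its scale function $S(x)=\int_0^x e^{-2V(y)}\,dy$ makes $X$ a martingale, so the probability of hitting $\delta/\eps$ before $-\delta/\eps$ starting from $y/\eps\in(-\tilde\eta,\tilde\eta)$ is $(S(y/\eps)-S(-\delta/\eps))/(S(\delta/\eps)-S(-\delta/\eps))$. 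As $\eps\to0$, the periodicity of $b_\pm$ (together with the mean-zero condition) makes $\pm S(\pm\delta/\eps)$ asymptotic to $(\delta/\eps)$ times the average of $e^{-2V}$ over a period on the appropriate side; the identity $C_\pm^{-2}=(\int_0^1 e^{2V_\pm})(\int_0^1 e^{-2V_\pm})$ for one-dimensional effective diffusion coefficients then converts this into the claimed ratio $p_+/p_-=\lambda_+C_+^2/(\lambda_-C_-^2)$.

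The main obstacle I expect is securing the uniformity in the starting point $y$ for condition~\eref{2.1}: the corrector argument must be combined with the boundary contribution at $\tau^\eps$, where $Y^\eps(\tau^\eps)$ is close to the interface, and one must exploit that the very definition of $\tau^\eps$ forces $X^\eps/\eps$ to remain in the fully periodic region throughout, so that the correctors $g_\pm$, $h_\pm$ really are periodic Poisson solutions rather than mere smooth interpolations. Everything else then reduces to bookkeeping of $\eps$-sized error terms, and the computations for \eref{2.2} and \eref{2.3} follow relatively cleanly from the explicit one-dimensional structure.
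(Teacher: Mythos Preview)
Your arguments for \eref{2.1} (via a second corrector $h_i$) and for \eref{2.3} (via the scale function and the identity $C_\pm^{-2}=\int e^{2V_\pm}\int e^{-2V_\pm}$) are exactly the approaches the paper takes (Lemma~\ref{lem:avzero} and Proposition~\ref{prop:exit}), and the limiting ratio you obtain is correct.

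The treatment of \eref{2.2} is where you and the paper diverge. You dismiss it in one line as ``standard occupation-time estimates'', whereas the paper devotes the bulk of its effort to it (Proposition~\ref{prop:timeint}): a time change turning the diffusion coefficient of $Y^\eps$ into $1$, then two successive pathwise couplings with auxiliary processes $U^\eps$ and $V^\eps$ having explicit piecewise drifts (Lemma~\ref{lem:coupleBM} and its companion), and finally an explicit resolvent computation for $V^\eps$. Your shortcut can in fact be made to work in one dimension, but you should spell it out: since the centering of $b_\pm$ forces $V$ to be bounded on all of $\R$, the scale map $s(x)=\int_0^x e^{-2V(u/\eps)}\,du$ is bi-Lipschitz uniformly in $\eps$; hence $s(X^\eps)$ is a time-changed Brownian motion with time change bounded above and below independently of $\eps$, and the explicit Brownian resolvent kernel $(2\lambda)^{-1/2}e^{-\sqrt{2\lambda}|\cdot|}$ gives an $\CO(\delta)$ bound on the occupation time of any interval of length $\CO(\delta)$, uniformly in the starting point. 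This is considerably shorter than the paper's route, at the cost of being specific to dimension one (which is presumably why the authors, who also treat the multidimensional case in \cite{MultiD}, chose the coupling approach).

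Your identification of the ``main obstacle'' is off. The boundary term at $\tau^\eps$ in the corrector argument is trivially $\CO(\eps^2)$ since $h_i$ and the derivatives of $f$ are bounded, and uniformity in $y$ is automatic because every factor is uniformly bounded. The paper's real effort goes into \eref{2.2}, which you (with some justification, as above) treat as routine.
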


This yields the

\begin{proof}[Proof of Theorem \ref{Main Theorem of Section 3}]
Before we start, let us remark that the initial condition $y$ for the corrected process
$Y^\eps$ and the initial condition $x$ for the original process $X^\eps$ are related by $y = x + \eps g(x/\eps)$.

Note also that, thanks to the identity $\int_{t_0}^\infty e^{-\lambda s} F(s)\,ds = \int_{t_0}^\infty \lambda e^{-\lambda t} \int_{t_0}^t F(s)\,ds\,dt$
valid for any bounded measurable function $F$, the left hand side in \eref{2.4} can be written as
\begin{equs}
\Delta(\eps) &= \int_{t_0}^\infty \lambda e^{-\lambda t} \E_y \Bigl(f(Y^\eps(t)) - f(Y^\eps(t_0)) - \int_{t_0}^t Af(Y^\eps(s))\,ds\,\Big|\,
\mathscr{F}_{[\,0,\,t_{0}]} \Bigr)\,dt\\
&\eqdef  \int_{t_0}^\infty \lambda e^{-\lambda t} \E_y \bigl(\CG_f(Y^\eps,t_0,t) \big|\,
\mathscr{F}_{[\,0,\,t_{0}]} \bigr)\,dt\;.
\end{equs}

We have already established the weak precompactness of the family $\{\mathbb{P}_{x,\eps},\, \eps > 0\}$ in the space $\CC(\R_+,\R)$.
The uniformity in $x$ of the convergence of $\Delta(\eps)$ to $0$ then implies that
 for any $n$, any $0 \leq t_{1} < \cdots < t_{n} \leq t_{0}$, and any bounded measurable function $G(x_{1}, \ldots, x_{n})$, $x_{i} \in \R$,
   \begin{equ}
   \Bigl|\, \mathbb{E}_y\Bigl(\,G(Y^\eps(t_{1}), \ldots, Y^\eps(t_{n})) \cdot \int_{t_{0}}^{\infty}\lambda e^{- \lambda t}\CG_f(Y^\eps,t_0,t)\,dt\Bigr) \Bigr|
    \leq\, \sup\bigl|G \bigr| \cdot \Delta (\eps)\;. \label{3.4}
    \end{equ}
   If we furthermore assume that $G$ is continuous, then the expression inside the expectation is a continuous function on $\CC(\R_+,\R)$,
   so that any accumulation point $X^0$ satisfies
   \begin{equs} \int_{t_{0}}^{\infty} \lambda e^{- \lambda t}\E\Bigl( G\bigl(X^0(t_{1}), \ldots,X^0(t_{n})\bigr) \CG_f(X^0,t_0,t)\Bigr)\,dt = 0\;. \label{3.5} \end{equs}
   Since the integrand is a continuous function of $t$ and a continuous function is determined uniquely by its Laplace transform,
 this implies that $\mathbb{E}\bigl(G(\ldots) \CG_f(X^0,t_0,t) \bigr) = 0$ for all $n$ and $0 \leq t_{1} < \cdots <t_{n} \leq t_{0}$, so that
in particular the random function $f(X^0(t)) - \int_0^t Af(X^0(s))\,ds$ is indeed a martingale in the filtration generated by
the process $X^0$.

Since the laws of the starting points of $X^\eps$ are all equal to $\delta_x$ by construction,
we conclude that the law of $X^0$ is indeed a solution of the martingale problem corresponding to $A$, starting from $x_{0}$.
\end{proof}

\begin{proof}[Proof of Proposition~\ref{prop:assum}]
The proofs of \eref{2.1}, \eref{2.2} and \eref{2.3} will be given as three separate propositions.

\begin{prop}\label{prop:firstbound}
There exists $\tilde \eta > 0$ such that the process $Y^{\eps}(t)$ satisfies \eref{2.1} with $k(\eps) = {\eps}$, that is,
\begin{equs}\mathbb{E}_{y}\Bigl[\,e^{-\lambda \tau^{\eps}}f\bigl(Y^{\eps}(\tau^{\eps})\bigr) - f(x) + \int_{0}^{\tau^{\eps}}e^{-\lambda t}\Bigl(\lambda f\bigl(Y^{\eps}(t)\bigr) - {1\over 2}C_+^2 f''\bigl(Y^{\eps}(t)\bigr)\Bigr)\,dt\Bigr] = \CO\bigl(\eps\bigr)\;,
\end{equs}
for every function $f \in \CC^\infty_0(\R_+)$, uniformly in $y \in [\tilde\eta \eps, \infty)$, and similarly for the left side of the interface.
\end{prop}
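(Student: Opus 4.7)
The plan is to combine two applications of Itô's formula with two successive correctors: the first corrector $g_+$ (already built into $Y^\eps$) kills the large drift and turns $Y^\eps$ into a local martingale with a fluctuating diffusivity, while a second corrector $h_+$ replaces that fluctuating diffusivity by its spatial average $C_+^2$ at a cost of only $\CO(\eps)$.

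Concretely, I would first choose $\tilde\eta > \eta + \norm{g_+}_\infty$ so that, as long as $t < \tau^\eps$, the inequality $Y^\eps(t)\ge \eps\tilde\eta$ forces $X^\eps(t) \ge \eps\eta$, placing us entirely in the periodic regime where $b = b_+(\cdot - \eta)$ and $g = g_+(\cdot-\eta)$. Applying Itô to $Y^\eps = X^\eps + \eps g(X^\eps/\eps)$ and using the defining identity $\frac{1}{2} g_+'' + b_+ g_+' = -b_+$, the $\CO(\eps^{-1})$ drift cancels and one obtains $dY^\eps = (1+g_+'(X^\eps/\eps - \eta))\,d\tilde B$. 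A further application of Itô to $e^{-\lambda t}f(Y^\eps(t))$, combined with boundedness of the resulting martingale integrand (so that optional stopping applies at $\tau^\eps$), produces an exact identity reducing the claim to the estimate
\begin{equation*}
\E_y\int_0^{\tau^\eps} e^{-\lambda t}\bigl[(1+g_+'(X^\eps/\eps-\eta))^2 - C_+^2\bigr]f''(Y^\eps(t))\,dt = \CO(\eps)\;.
\end{equation*}

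To establish this bound I would introduce a second corrector $h_+$, uniquely defined up to an additive constant by $\CL_+ h_+ = (1+g_+')^2 - C_+^2$. The Fredholm alternative supplies a smooth periodic $h_+$ precisely because the right-hand side has zero mean against $\mu_+$, which is the content of the definition \eref{e:defCi}. Applying Itô to the uniformly $\CO(\eps^2)$ function $\eps^2 h_+(X^\eps/\eps - \eta)$ yields the key representation
\begin{equation*}
\bigl[(1+g_+')^2-C_+^2\bigr]\,dt = d\bigl[\eps^2 h_+(X^\eps/\eps-\eta)\bigr] - \eps\, h_+'(X^\eps/\eps-\eta)\,d\tilde B\;.
\end{equation*}
Substituting this into the integrand and carrying out semimartingale integration by parts against $F(t) = \frac{1}{2} e^{-\lambda t} f''(Y^\eps(t))$ then leaves four kinds of terms: boundary contributions of size $\CO(\eps^2)$, an $\int H\,dF$ term whose deterministic part is $\CO(\eps^2)$ after integration against $e^{-\lambda t}$, true martingale pieces that vanish in expectation, and finally the quadratic covariation $[F,\eps^2 h_+(X^\eps/\eps-\eta)]_{\tau^\eps}$ whose per-unit-time rate is $\CO(\eps e^{-\lambda t})$ and which therefore contributes the leading error of order $\CO(\eps/\lambda)$.

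The main subtlety is that $\tau^\eps$ is not bounded uniformly in $y$, so every estimate must be genuinely uniform in time. This is precisely why the discount factor $e^{-\lambda t}$ plays an essential role: it supplies the integrable envelope that converts the spatial bounds (coming from periodicity and smoothness of $h_+$, $g_+'$ and the decay of the derivatives of $f$) into the required uniform-in-$y$ time integrals. The analysis on the left side of the interface is completely symmetric, using $b_-$, $g_-$, $h_-$ and $\CL_-$, and yields the companion statement.
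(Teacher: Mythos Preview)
Your proposal is correct and follows essentially the same route as the paper: reduce via It\^o to controlling $\E_y\int_0^{\tau^\eps} e^{-\lambda t}\bigl[(1+g_+')^2-C_+^2\bigr]f''\,dt$, then introduce a second periodic corrector $h_+$ with $\CL_+ h_+=(1+g_+')^2-C_+^2$ and expand $\eps^2 h_+(X^\eps/\eps)$ by It\^o. The only organisational differences are that the paper first replaces $f''(Y^\eps)$ by $f''(X^\eps)+\CO(\eps)$ and then applies It\^o directly to the triple product $\eps^2 e^{-\lambda t}F(X^\eps)h_+(X^\eps/\eps)$ (packaged as a standalone averaging lemma), whereas you keep $f''(Y^\eps)$ and use semimartingale integration by parts, picking up the $\CO(\eps)$ covariation term explicitly; the resulting estimates are the same.
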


\begin{proof}
The treatment of both sides of the interface is identical, so we restrict ourselves to $\R_+$. As before, the initial condition $y$ for the corrected process
$Y^\eps$ and the initial condition $x$ for the original process $X^\eps$ are related by $y = x + \eps g(x/\eps)$.
Note that one has $g_\pm'(x) \neq -1$ for any $x$ since otherwise, by uniqueness of the solutions to the ODE $g'' = -2(1+g')b$, this would entail
that $g_\pm'(x) = -1$ over the whole interval $[0,1]$, in contradiction with the periodic boundary conditions.

By possibly making $\eta$ slightly larger, we can (and will from now on) therefore assume that $g'(x) > -1$
uniformly over $x \in \R$, so that the correspondence $x \leftrightarrow y$ is a
bijection. Since $g$ is bounded, this
shows that one can find $\tilde \eta > 0$ so that $y \not\in [-\eps \tilde\eta, \eps \tilde \eta]$ implies that $x \not\in [-\eps \eta, \eps \eta]$.
In particular, fixing such a value for $\tilde \eta$ from now on, we see that the drift vanishes in the SDE satisfied by $Y^\eps$, provided
that we consider the process only up to time $\tau^\eps$.

Using the integration by parts formula and It\^{o}'s formula for each $Y_{y}^{\eps}$ we get
    \begin{equs}e^{-\lambda t}f\bigl(Y_{y}^{\eps}(\tau^{\eps})\bigr) &= f(x) + \int_{0}^{\tau^{\eps}}e^{-\lambda s}\bigl(1+g_+'(\eps^{-1}X_{x}^\eps(s))\bigr) f'\bigl(Y_{y}^{\eps}(s) \bigr)\,dB_{s} \label{102}\\
  &\quad  - \int_{0}^{\tau^{\eps}} e^{-\lambda s}\Bigl[\,\lambda f\bigl(Y_{y}^{\eps}(s)\bigr) + \frac{1}{2}\bigl(1+ g_+'(\eps^{-1}X_{x}^\eps(s))\bigr)^{2} f''\bigl(Y_{y}^{\eps}(s)\bigr)\Bigr]\,ds
    \end{equs}

Since the expectation of the stochastic integral vanishes (both $f'$ and $g_+'$ are uniformly bounded), all that remains to be
shown is that the last term in the above equation
converges at rate $\eps$ to the same term with $(1+g')^2$ replaced by $C_+^2$.

This will be a consequence of the following result (variants of which are quite standard in the theory of periodic
homogenization), which considers the fully periodic case. It is sufficient to consider this case in the situation at hand since
we restrict ourselves to times before $\tau^\eps$, so that the process does not `see' the interface.
  \begin{lem}\label{lem:avzero}
Let $b \colon \R^n \to \R^n$ be smooth and periodic with fundamental domain $\Lambda \subset \R^n$, and denote by $\mu$ the (unique) probability
measure on $\Lambda$ invariant for the SDE
\begin{equ}[e:SDE]
dX(t) = b(X(t))\,dt + dB_t\;,\quad X(0) = x\;,
\end{equ}
where $B$ is a standard $d$-dimensional Wiener process.
Assume furthermore that $\int_\Lambda b(x)\,\mu(dx) = 0$. (This condition will be referred to in the sequel as $b$ being centred.)

Let $h \colon \R^d \to \R$ be any smooth function that is periodic with fundamental domain $\Lambda$ and centred. Let furthermore $X^\eps(t) = \eps X(t/\eps^2)$
and let $\tau_\eps$ be a family of (possibly infinite) stopping times with respect to the natural filtration of $B$. Then, for every $F \in \CC_0^4(\R^d,\R)$
there exists $C>0$ independent of $\tau_\eps$ such that the bound
\begin{equs}
	\E_{x}\Biggl[\int_{0}^{\tau_\eps} e^{-\lambda s}F\bigl(X^{\eps}(s)\bigr)h\biggl(\frac{X^{\eps}(s)}{\eps}\biggr)\,ds\Biggr] \le C\eps\;,
\end{equs}
holds for any $\eps \in (0,1]$, uniformly in $x$.
\end{lem}
\begin{proof}
Denote by $\CL = {1\over 2}\Delta + \scal{b(x),\nabla}$ the generator of \eref{e:SDE} and let $g$ be the unique periodic centred solution
to $\CL g = h$. (Such a solution exists by the Fredholm alternative.)
Applying It\^{o}'s formula to the process
 \begin{equs}
    \eps^{2} e^{-\lambda t}F\bigl(X^{\eps}(t)\bigr)g\biggl(\frac{X^{\eps}(t)}{\eps}\biggr)\;,
 \end{equs}
we obtain the identity
\begin{equs}
    \eps^{2}e^{-\lambda \tau_\eps}F\bigl(X^\eps(\tau_\eps)\bigr)&g\biggl(\frac{X^\eps(\tau_\eps)}{\eps}\biggr) =
     \int_{0}^{\tau_\eps}e^{-\lambda s}F\bigl(X^{\eps}(s)\bigr)h\biggl(\frac{X^{\eps}(s)}{\eps}\biggr)\,ds \\
    &\qquad + \eps^{2}\int_{0}^{\tau_\eps}-\lambda e^{-\lambda s} F\bigl(X(s)\bigr)g\biggl(\frac{X(s)}{\eps}\biggr) \,ds \\
    &\qquad +\eps\int_{0}^{\tau_\eps}e^{-\lambda s}b\biggl(\frac{X^{\eps}(s)}{\eps}\biggr)\ldotp\nabla F\bigl(X^{\eps}(s)\bigr)g\biggl(\frac{X^{\eps}(s)}{\eps}\biggr)\,ds \\
    &\qquad + \frac{1}{2}\,\eps^{2}\int_{0}^{\tau_\eps}e^{-\lambda s}\Delta F\bigl(X^{\eps}(s)\bigr)g\biggl(\frac{X^{\eps}(s)}{\eps}\biggr)\,ds \\
    &\qquad +\eps^{2}\int_{0}^{\tau_\eps}e^{-\lambda s}\nabla F\bigl(X^{\eps}(s)\bigr)g\biggl(\frac{X^{\eps}(s)}{\eps}\biggr)\,dB_{s} \\
    &\qquad +\eps \int_{0}^{\tau_\eps}e^{-\lambda s}F\bigl(X^{\eps}(s)\bigr)\nabla g\biggl(\frac{X^{\eps}(s)}{\eps}\biggr)\,dB_{s} \\
    &\qquad +\eps\int_{0}^{\tau_\eps} e^{-\lambda s}\biggl(\nabla F\bigl(X^{\eps}(s)\bigr)\ldotp\nabla g\Bigl(\frac{X^{\eps}(s)}{\eps}\Bigr)\biggr)\,ds\;.
\end{equs}
The claim then follows by taking expectations and noting that all the functions of $X^\eps$ appearing in the various terms
are uniformly bounded.
\end{proof}
Returning to the proof of Proposition~\ref{prop:firstbound}, we first note that $f''(Y_y^\eps) = f''(X_x^\eps) + \CO(\eps)$, so that we
can replace  $f''(Y_y^\eps)$ by $f''(X_x^\eps)$ in \eref{102}, up to errors of $\CO(\eps)$.
Applying Lemma~\ref{lem:avzero} with $F = f''$ and $h = (1+ g_+')^2 - C_+^2$, the claim then follows at once.
\end{proof}

\begin{prop}\label{prop:timeint}
The convergence
    \begin{equs}\mathbb{E}\biggl[\int_{0}^{\infty}e^{-\lambda t}1_{(-\sqrt \eps,\sqrt \eps)}\bigl(Y_y^{\eps}(t)\bigr)\,dt\biggr] \rightarrow 0 \label{4.2forz}\end{equs}
takes place as $\eps \rightarrow 0$, uniformly in the initial point $y \in \R$.
 In particular,   (\ref{2.2}) holds with $\delta(\eps) = \sqrt \eps$.
\end{prop}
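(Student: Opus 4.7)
The plan is to show $\sup_{y\in\R}\phi_\eps(y)\to 0$, at the quantitative rate $O(\sqrt\eps)$, where
\begin{equ}
\phi_\eps(y)\eqdef\E_y\int_0^\infty e^{-\lambda t}1_{A_\eps}\bigl(Y^\eps(t)\bigr)\,dt\;,\qquad A_\eps\eqdef(-\sqrt\eps,\sqrt\eps)\;.
\end{equ}
First, by the strong Markov property applied at the first hitting time of $\bar A_\eps$, $\phi_\eps(y)\le \sup_{z\in\bar A_\eps}\phi_\eps(z)$ for every $y\in\R$, so it is enough to bound $\phi_\eps$ for $y$ in a bounded region.

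Fix now $\delta>0$ independent of $\eps$ and larger than the width of the interface, and let $T^\delta$ denote the first exit time of $Y^\eps$ from $(-\delta,\delta)$. The core estimate is
\begin{equ}[e:keyprop6]
\sup_{|y|\le\delta,\,\eps\in(0,1]}\E_y\int_0^{T^\delta}1_{A_\eps}\bigl(Y^\eps(s)\bigr)\,ds = O(\sqrt\eps)\;.
\end{equ}
To prove \eref{e:keyprop6} I apply the occupation time formula to the semimartingale $Y^\eps$: since $g'$ is bounded away from $-1$, $\langle Y^\eps\rangle$ grows at a rate bounded below by a positive constant $c$, whence
\begin{equ}
\int_0^{T^\delta}1_{A_\eps}(Y^\eps_s)\,ds \le c^{-1}\int_{A_\eps}\ell^z_{T^\delta}\,dz\;,
\end{equ}
where $\ell^z$ is the semimartingale local time of $Y^\eps$ at level $z$. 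The bound \eref{e:keyprop6} then follows from $|A_\eps|=2\sqrt\eps$ once one establishes the uniform bound
\begin{equ}[e:localtime]
\sup_{z\in A_\eps,\,|y|\le\delta,\,\eps\in(0,1]}\E_y[\ell^z_{T^\delta}] < \infty\;.
\end{equ}

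With \eref{e:keyprop6} in hand, I close the argument by iterating the strong Markov property at $T^\delta$:
\begin{equ}
\phi_\eps(y) \le \E_y\int_0^{T^\delta}1_{A_\eps}(Y^\eps_s)\,ds + \E_y[e^{-\lambda T^\delta}]\sup_z\phi_\eps(z)\;.
\end{equ}
An application of It\^o's formula to $y\mapsto y^2$, using that $|y\,b^\eps(y)|$ is uniformly bounded in $y$ and $\eps$ (the drift of $Y^\eps$ is of order $\eps^{-1}$ but supported on an $O(\eps)$-neighborhood of the origin), shows that $\E_y[T^\delta]\ge c'>0$ uniformly, hence $q\eqdef\sup_{\eps,y}\E_y[e^{-\lambda T^\delta}]<1$. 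Combining with \eref{e:keyprop6} yields $\sup_y\phi_\eps(y)\le C\sqrt\eps/(1-q)$, which tends to $0$ as $\eps\to 0$.

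The hard step is \eref{e:localtime}: Tanaka's formula applied to $y\mapsto|y-z|$ produces a term $\int_0^{T^\delta}\sign(Y^\eps_s-z)b^\eps(Y^\eps_s)\,ds$ whose integrand has size $O(\eps^{-1})$ on the interface, so a naive bound on the drift contribution blows up. The cleanest way around this is to pass through the scale function $s^\eps$ of $L^\eps$: $s^\eps(Y^\eps)$ is a local martingale, hence (after a time-change) a standard Brownian motion stopped at a bounded time, for which the analogous local-time bound is classical. Since $s^\eps$ has derivatives bounded above and below on $[-\delta,\delta]$ uniformly in $\eps$ (as can be checked by integrating the ODE for $s^\eps$ explicitly and using that the two periodic pieces of the potential are uniformly bounded), pulling back under $(s^\eps)^{-1}$ only distorts lengths by a bounded factor and yields \eref{e:localtime}.
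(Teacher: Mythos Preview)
Your approach is essentially correct and genuinely different from the paper's. The paper proceeds by (i) time-changing $Y^\eps$ to a process $Z^\eps$ with unit diffusion coefficient, (ii) building explicit pathwise couplings that sandwich $|Z^\eps|$ between two auxiliary diffusions $U^\eps$, $V^\eps$ with piecewise-linear and piecewise-constant drifts, and (iii) solving the resolvent equation $(\lambda-\CL_V^\eps)^{-1}1_{(-\delta,\delta)}$ for $V^\eps$ by hand on each sub-interval. Your route via the occupation-time formula, the scale function, and a strong-Markov iteration is considerably more direct: the key observation is that $(s^\eps)'$ is bounded above and below \emph{uniformly in $\eps$} (because $(s_X^\eps)'(x)=e^{-2V(x/\eps)}$ and $V$ is bounded on all of $\R$ thanks to the centring of $b_\pm$, while $h_\eps'=1+g'$ is bounded away from $0$), so passing to the local martingale $M=s^\eps(Y^\eps)$ removes the $O(\eps^{-1})$ drift at no cost and reduces \eref{e:localtime} to the trivial bound $\E_y[\ell^w_{T^\delta}(M)]=\E_y|M_{T^\delta}-w|-|M_0-w|\le C\delta$. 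This bypasses both the coupling lemmas and the explicit ODE computation in the paper; the paper's approach, on the other hand, has the advantage of being more self-contained and of making the ``worst-case'' interface behaviour completely explicit.

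There is one real gap. The implication ``$\E_y[T^\delta]\ge c'>0$, hence $q=\sup_{\eps,y}\E_y[e^{-\lambda T^\delta}]<1$'' is false in general: take $T$ with $\Prob(T=0)=1-p$, $\Prob(T=M)=p$, $pM=c'$ and let $M\to\infty$. What you actually need is a uniform lower bound on $\Prob_y(T^\delta>t_0)$ for some $t_0>0$ and $y\in\bar A_\eps$. This follows immediately from the same scale-function trick: $M=s^\eps(Y^\eps)$ has $d\langle M\rangle_t/dt\le C$, so by Dambis--Dubins--Schwarz $T^\delta\ge C^{-1}\tau_B$ where $\tau_B$ is the Brownian exit time from $[s^\eps(-\delta),s^\eps(\delta)]$. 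For $y\in\bar A_\eps$ the starting point $s^\eps(y)$ stays at uniformly positive distance from both endpoints, so $\Prob_y(T^\delta>t_0)\ge p_0>0$ and hence $q\le 1-p_0(1-e^{-\lambda t_0})<1$. With this correction your argument goes through and yields the same $O(\sqrt\eps)$ rate as the paper.
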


\begin{proof}
The main idea is to first perform a time-change that turns the diffusion coefficient of $Y^\eps$ into $1$ and to then compare the
resulting process to the process $V^\eps$ which is the solution to
\begin{equ}[e:defV]
dV^\eps = b^{\eps}_{V}(V^\eps)\,dt + dB_t\;,
\end{equ}
where the drift $b^{\eps}_{V}$ is given by
\begin{equ}[e:defbV]
b^{\eps}_{V}(x)\, =
 \begin{cases}
 -\frac{C_V}{\eps}& \quad\text{ for $0 \leq x \leq \hat \eta \eps$,}
\\
 \frac{C_V}{\eps}& \quad\text{ for $-\hat \eta \eps \leq x < 0$,}\\
 0 &  \quad\text{ otherwise,}
 \end{cases} \end{equ}
for $C_V$ and $\hat \eta$ some positive constants independent of $\eps$ to be determined below.
An explicit resolvent equation then allows one to show that
\eref{4.2forz} with $Y$ replaced by $V$ tends to zero as $\eps \rightarrow 0$, uniformly in the initial point.

First, let us start with the time change. As in the proof of Proposition~\ref{prop:firstbound}, we assume that $g$ is chosen in such a way that
$g'$ is bounded away (from below) from $-1$, so that there exists a constant $\phi>0$ such that $g'(x) \ge \phi - 1$ for every $x \in \R$.
In order to turn the diffusion coefficient of $Y^\eps$ into $1$, we use the time change associated with the quadratic variation of the $Y^{\eps}$,
\begin{equs}
\langle Y^{\eps}, Y^{\eps} \rangle(t) = \int_{0}^{t} \Bigl(1+g'\Bigl(\frac{X^{\eps}(s)}{\eps} \Bigr) \Bigr)^{2}\,ds\;,
\end{equs}
thus setting
\begin{equs}
C^{\eps}_{t} = \inf\bigl\{t'>0: \langle Y^{\eps}, Y^{\eps} \rangle(t') > t\bigr\}\;.
 \end{equs}
Defining the function $\hat b = b + \CL g$, the process $Z^\eps(t) = Y^{\eps}(C^{\eps}_{t})$ then satisfies the equation
\begin{equ}
	Z^{\eps}(t) = y + \int_{0}^{C^{\eps}_{t}} \frac{1}{\eps}\,\hat b \Bigl(\frac{X^{\eps}(s)}{\eps} \Bigr)\,ds + \int_{0}^{C^{\eps}_{t}}\Bigl(1+g'\Bigl(\frac{X^{\eps}(s)}{\eps} \Bigr)\Bigr)\,dB_{s} \;.
\end{equ}
Note that the time-change was defined precisely in such a way that the second term in this expression is equal to some Brownian
motion $B^\eps(t)$. Inserting the expression for the time change, the first term can be rewritten as
    \begin{equ}
    \int_{0}^{C^{\eps}_{t}}\frac{1}{\eps}\,\hat b \Bigl(\frac{X^{\eps}(s)}{\eps} \Bigr) \,ds =
     \int_{0}^{t}\frac{1}{\eps}\,\hat b\Bigl(\frac{X^{\eps}(C^{\eps}_{s})}{\eps}\Bigr) \Bigl(1+g'\Bigl(\frac{X^{\eps}(C^\eps_s)}{\eps} \Bigr)\Bigr)^{-2}\,ds \;.
     \end{equ}
It follows that the drift term is non-zero only when the time-changed process occupies the region $(-\eps\eta-\eps\|g\|_{\infty},\,\eps\eta+\eps\|g\|_{\infty})$ just as for the non time-changed process. To summarise, there exists a drift $\tilde b$ bounded uniformly by
$C_V\over \eps$ for some constant $C_V > 0$ and vanishing outside of $(-\tilde \eta \eps, \tilde \eta \eps)$ for $\tilde \eta = \eta + \|g\|_\infty$,
as well as a Brownian motion $B^\eps$, so that the process $Z^\eps_x$ satisfies the SDE
\begin{equ}[e:equZ]
dZ^\eps_x = \tilde b(Z_x^\eps)\,dt + dB^\eps_t\;,\qquad Z^\eps_x(0) = y \;.
\end{equ}

Now, look at how the time change affects the expression (\ref{4.2forz}), where we set $G^\eps = (-\sqrt \eps, \sqrt \eps)$:
    \begin{equs}
    \int_{0}^{\infty}e^{-\lambda t}1_{G^{\eps}}\bigl(Y^{\eps}(t)\bigr)\,dt &=  \int_{0}^{C^{\eps}_{\infty}}e^{-\lambda t}1_{G^{\eps}}\bigl(Y^{\eps}(t)\bigr)\,dt\\
      &= \int_{0}^{\infty}e^{-\lambda t}1_{G^{\eps}}\bigl(Z^{\eps}(t)\bigr)\Bigl(1+g'\Bigl(\frac{X^{\eps}(C_t^\eps)}{\eps} \Bigr)\Bigr)^{-2}\,dt\\
       &\leq \sup_{x \in \mathbb{R}}\bigl(1+g'(x)\bigr)^{-2}\int_{0}^{\infty}e^{-\lambda t}1_{G^{\eps}}\bigl(Z^{\eps}(t)\bigr)\,dt\;.
        \end{equs}
Hence if it can be shown that,
    \begin{equ}[e:boundWanted]
    \E_y\biggl[\int_{0}^{\infty}e^{-\lambda t}1_{G^{\eps}}\bigl(Z^{\eps}(t)\bigr)\,dt\biggr] \rightarrow 0
    \end{equ}
uniformly in the initial point $x$ for the underlying process $X^\eps_x$, as $\eps \rightarrow 0$, then our claim follows.
The idea is to bound \eref{e:boundWanted} by the `worst-case scenario' obtained by replacing the process $Z^\eps$ by the process
$V^\eps$ described in \eref{e:defV}.

One technical problem that arises is that it is tricky to get pathwise control on the behaviour of $V$ due to the discontinuity
of its drift. We therefore
first compare $Z^{\eps}$ with the process $U_{x}^{\eps}$ solution to
\begin{equ}[e:defU]
dU^\eps = b^\eps_U(U^\eps)\,dt + dB_U^\eps(t)\;,\quad U_x^\eps(0) = y\;,
\end{equ}
where $B_U^\eps$ is a Brownian motion to be determined and $b^{\eps}_{U}$ is the Lipschitz continuous odd function defined on the positive real numbers by
\begin{equs}
b^{\eps}_{U}(x) =
\begin{cases}
-\frac{C_{V}}{\eps^{2}}\,x&\text{for $|x| \le \eps$,} \\
 -\frac{C_{V}}{\eps}&\text{for $\eps < x \leq (2+ \tilde \eta) \eps$,}\\
 -\frac{C_{V}}{\eps^{2}}\,((3+ \tilde \eta)\eps -x)&\text{for $(2+ \tilde \eta) \eps<x\leq(3+ \tilde \eta) \eps$,}\\
 0 &\text{otherwise.}
 \end{cases}
\end{equs}
The SDE \eref{e:defU} satisfies pathwise uniqueness, which is why we are using it as an intermediary between $Z^{\eps}$ and $V^{\eps}$. Now, what we are going to do is, given a realisation of the Brownian motion $B^\eps$ driving $Z^\eps$ in \eref{e:equZ}, to choose the Brownian motion $B_U^\eps$ driving
$U_{x}^{\eps}$ by changing the sign of the increments in such a way that
the absolute value of $U_{x}^{\eps}$ is always less than or equal
to $|Z_{x}^{\eps}| + 2\eps$. By pathwise uniqueness, we are indeed free to choose the Brownian motion in \eref{e:defU}. The choice of the Brownian motion is the content of the following lemma:

\begin{lem}\label{lem:coupleBM}
For every initial condition $x$, there exists a map $B^\eps \mapsto B_U^\eps$ that preserves Wiener measure and such that
 $|U_{x}^{\eps}| \leq |Z_{x}^{\eps}| + 2\eps$ almost surely.
 In particular, it follows that \eref{e:boundWanted} is bounded by
\begin{equ}
	\mathbb{E}_{x}\Bigl(\int_{0}^{\infty}e^{-\lambda t}1_{(-\delta',\,\delta')}\bigl(U_{x}^{\eps}(t)\bigr)\,dt\,\Bigr)\;, \label{8.2}
\end{equ}
where $\delta' = \delta+2\eps$.
\end{lem}
\begin{proof}
The construction works in the following way. Consider first the processes driven by the same realisation $B^\eps$ and define a
stopping time $\tau_0$ by $\tau_{0} = \inf\{t>0: |U^{\eps}_{x}(t)| = |Z^{\eps}_{x}(t)| + 2\eps\}$. This stopping time is strictly positive
and one has $|U^{\eps}_{x}(\tau_0)| \ge 2\eps$. For times after $\tau_0$, we determine $B_U^\eps$ by
\begin{equs}
B_U^\eps(t) = B_U^\eps(\tau_0) + \sign\bigl(U^{\eps}_{x}(\tau_0)\bigr) \int_{\tau_0}^t \sign \bigl(Z^{\eps}_{x}(s)\bigr)\,dB^\eps(s) \;,
\end{equs}
and we introduce the stopping time $\sigma_1 =  \inf\{t>\tau_{0}: |U^{\eps}_{x}| = \eps\}$. Since by construction
$U^\eps$ does not change sign between $\tau_0$ and $\sigma_1$, it then follows from
the It\^o-Tanaka formula that up to $\sigma_1$ one has
\begin{equs}
d |U^\eps| &= b_U^\eps\bigl(|U^\eps|\bigr)\,dt + \sign \bigl(Z^{\eps}\bigr)\,dB^\eps(t)\;,\\
d |Z^\eps| &= \sign \bigl(Z^{\eps}\bigr) \tilde b^\eps\bigl(Z^\eps\bigr)\,dt + \sign \bigl(Z^{\eps}\bigr)\,dB^\eps(t) + dL(t)\;,
\end{equs}
for some local time term $L$. Since the local time term always yields positive contributions and since it follows from
the definition that $b_U^\eps(u) \le \sign(z) \tilde b^\eps(z)$ for $|u| \ge \eps$ and $|u| \le |z| + 2\eps$, we can apply a simple comparison result for SDEs to conclude that the inequality $|U_{x}^{\eps}| \leq |Z_{x}^{\eps}| + 2\eps$ holds almost surely between times
$\tau_0$ and $\sigma_0$.

We then drive again both processes by the same noise and
define as before $\tau_1$ by  $\tau_{1} = \inf\{t>\sigma_1: |U^{\eps}_{x}(t)| = |Z^{\eps}_{x}(t)| + 2\eps\}$. Note that
$\tau_1 > \sigma_1$ almost surely since one has $|U^{\eps}_{x}(\sigma_0)| \le |Z^{\eps}_{x}(\sigma_0)| + \eps$.
We then apply the previous construction iteratively, so that, setting $\sigma_0 = 0$, we have constructed $B_U^\eps$ by
\begin{equs}
B_U^\eps(t) = \int_{0}^{t}  \Bigl(\sum_{n=0}^{\infty} 1_{[\sigma_{n},\tau_{n})}(s) +\sum_{n=0}^{\infty} 1_{[\tau_{n},\sigma_{n+1})}(s) \,\sign\bigl(U^{\eps}_{x}(\tau_{n})Y^{\eps}_{x}(s)\bigr)\Bigr) \,dB^\eps (s) \;.
\end{equs}
Since the process $U^\eps$ has finite quadratic variation and has to move by at least $\eps$ between
any two successive stopping times, our sequence of stopping times does converge to infinity, so that $B_U^\eps(t)$
is indeed a Brownian motion with the required property.
\end{proof}

In our next step, we compare the process $U_{x}^{\eps}$ that we just constructed
with the process $V_{x}^{\eps}$ defined in \eref{e:defV}, where we set $\hat \eta = 5 + \tilde \eta$.
Since the drift coefficient is bounded, it follows from an application of Girsanov's theorem like in
\cite[Corollary~IX.1.12]{MR1083357} that this SDE has a solution for some Brownian motion $B_{V}^\eps$, say.

We now fix $B_V^\eps$ and use it to construct a Brownian motion $B_U^\eps$ driving \eref{e:defU}
in such a way that the absolute value of $V_{x}^{\eps}$ always stays less than $|U_{x}^{\eps}| + 2\eps$:
 \begin{lem}
 There exists a map $B_V^\eps \mapsto B_U^\eps$ that preserves Wiener measure and such that
 $|V_{x}^{\eps}| \leq |U_{x}^{\eps}| + 2\eps$ for all times almost surely. In particular, (\ref{8.2}) is bounded by
\begin{equs}\mathbb{E}_{x}\Bigl(\int_{0}^{\infty}e^{-\lambda t}1_{(-\delta'',\,\delta'')}(V_{x}^{\eps}(t))\,dt\,\Bigr)\;, \label{8.1} \end{equs}
with $\delta'' = \delta'+2\eps$.
\end{lem}
 \begin{proof}
The argument is virtually identical to that of Lemma~\ref{lem:coupleBM}, so we do not reproduce it here.
\end{proof}

It now remains to show that:

\begin{lem}
The expression (\ref{8.1}) converges to $0$ uniformly in the initial point as $\eps \rightarrow 0$.
\end{lem}
\begin{proof}
We write $\eps$ for $5\eps+\eps\eta$ and $\delta$ for $\delta + 4\eps$ for ease of notation, but this has no bearing on the rates of convergence of the aforementioned quantities and hence on the calculation. We have the identity
    \begin{equs} \mathbb{E}_{x}\biggl[\int_{0}^{\infty}e^{-\lambda t}1_{(-\delta,\,\delta)}\bigl(V^{\eps}_{x}(t)\bigr)dt\biggr]
    &
    = \int_{0}^{\infty}e^{-\lambda t}P_{t}^\eps\bigl(1_{(-\delta,\,\delta)}\bigr)(x)\,dt\\
     &= (\lambda - \CL_V^{\eps})^{-1}1_{(-\delta,\delta)}(x) \label{e:resolvent}
\end{equs}
by the resolvent equation (see for example \cite[Chapter~1]{MR838085}), where $\CL_V^{\eps}$ is the generator of
the Markov semigroup $P_{t}^\eps$ associated to $V^{\eps}$.

We now proceed to computing this expression explicitly in order to show that its supremum tends to zero uniformly in $x$.
In order to keep notations simple, we assume for the remainder of this proof that $\eta = C_V = 1$, which can always be achieved by
rescaling space and redefining $\eps$. In this case, the solution $f(x) = (\lambda - \CL_V^{\eps})^{-1}1_{(-\delta,\delta)}(x)$
to the resolvent equation can be assembled piecewise on the intervals $(-\infty,-\delta)$, $(-\delta,-\eps)$, $(-\eps,0)$, $(0,\eps)$, $(\eps,\delta)$ and $(\delta,\infty)$ by making sure that it is $\CC^1$ at each junction. Owing to the symmetry of the problem, the
function $f$ will be an even function of $x$, hence we only have to analyze it on one side of the origin.

The general solution on each interval can be written as
\begin{equs}
f(x) =
\left\{\begin{array}{cl}
	B_0 e^{-\sqrt{2\lambda} x} & \text{for $x \ge \delta$,} \\
	{1\over \lambda} + A_1 e^{\sqrt{2\lambda} x} + B_1 e^{-\sqrt{2\lambda} x} & \text{for $\eps \le x \le \delta$,} \\
	{1\over \lambda} + \eps^2 A_2 e^{\gamma_1 x} + B_2 e^{-\gamma_2 x} & \text{for $x \le \eps$,}
\end{array}\right.
\end{equs}
where
\begin{equs}
\gamma_{1} &= \biggl(\frac{1}{\varepsilon^{2}}+2\lambda\biggr)^{\frac{1}{2}} + \frac{1}{\varepsilon} =  {2\over \eps} + \CO(\eps)\\
    \gamma_{2} & = \biggl(\frac{1}{\varepsilon^{2}}+2\lambda\biggr)^{\frac{1}{2}} - \frac{1}{\varepsilon} = \lambda \eps + \CO(\eps^2)\;.
\end{equs}
The reason for the somewhat strange choice of adding an explicit factor $\eps^2$ in front of $A_2$ is justified
\textit{a posteriori} by noting that with this scaling, the matching conditions at $\eps$ and $\delta$
(as well as the fact that the derivative should vanish
at the origin) yield the following linear system:
\begin{equ}
M \begin{pmatrix}
B_0 \\ A_1 \\ B_1 \\ A_2 \\ B_2
\end{pmatrix} =
\begin{pmatrix}
0 \\ 0 \\ 0 \\ -{1\over \lambda} \\ 0
\end{pmatrix}\;,\qquad
M = \begin{pmatrix}
0 & 0 & 0 &  2 & -\lambda \\
0 & -1 & -1 & 0 & 1 \\
0 & -1 & 1 & 0 & 0 \\
-1 & 1 & 1 & 0 & 0 \\
1 & 1 & -1 & 0 & 0
\end{pmatrix} + \CO(\delta)\;.
\end{equ}
To lowest order in $\eps$ and $\delta$, this can easily be solved exactly, yielding
\begin{equ}
(B_0,A_1,B_1,A_2,B_2) = -{1\over 2\lambda}\bigl(0 , 1, 1, \lambda , 2\bigr) +  \CO(\delta)\;.
\end{equ}
Inserting this into the expression for $f$ shows that $\sup_{x\in \R} |f(x)| = \CO(\delta) = \CO(\sqrt \eps)$, thus completing the proof.
\end{proof}
With the lemma regarding the resolvent calculation above, the proof of Proposition~\ref{prop:timeint} is complete.
\end{proof}

We finally show that

\begin{prop}\label{prop:exit}
For every $c>0$, the exit probabilities from the interval $(-\delta,\delta)$ satisfy the bound
\begin{equs}
\mathbb{P}_{x} \bigl[Y^{\eps}(\sigma^{\delta}) \in I_{i}\bigr] = p_{i} + \CO(\sqrt\eps)\;,
\end{equs}
uniformly for $x \in [-c\eps,c\eps]$.
\end{prop}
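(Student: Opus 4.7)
The plan is to reduce the exit problem for $Y^\eps$ to one for the unrescaled process $X^\eps$, for which a scale function is explicit, and then to evaluate this scale function asymptotically using the periodic structure of $V$ outside the interface.

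Specifically, $X^\eps$ admits the scale function
\begin{equ}
s^\eps(x) \eqdef \int_0^x e^{-2V(y/\eps)}\,dy\;,
\end{equ}
so that $s^\eps(X^\eps(\cdot))$ is a local martingale. The map $\Phi_\eps(x) = x + \eps g(x/\eps)$ is a bijection of $\R$ thanks to the choice $g' > -1$ made in the proof of Proposition~\ref{prop:firstbound}, and $Y^\eps = \Phi_\eps(X^\eps)$. The event $\{Y^\eps(\sigma^\delta) = \pm\delta\}$ therefore coincides with the event that $X^\eps$ exits $(a^\eps_-, a^\eps_+)$ at $a^\eps_\pm$, where $a^\eps_\pm \eqdef \Phi_\eps^{-1}(\pm\delta) = \pm\delta + \CO(\eps)$, starting from $x_0 = \Phi_\eps^{-1}(x) = x + \CO(\eps)$. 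Optional stopping applied to the bounded martingale $s^\eps(X^\eps(\cdot \wedge \sigma^\delta))$ then yields
\begin{equ}
\Prob_x\bigl[Y^\eps(\sigma^\delta) = +\delta\bigr] = \frac{s^\eps(x_0) - s^\eps(a^\eps_-)}{s^\eps(a^\eps_+) - s^\eps(a^\eps_-)}\;.
\end{equ}

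The next step is to evaluate $s^\eps(\pm\delta)$ asymptotically. After the change of variables $z = y/\eps$, using the decomposition $V(z) = V(\pm\eta) + V_\pm(z \mp \eta)$ valid for $\pm z > \eta$, together with the boundedness and periodicity of $V_\pm$, one finds
\begin{equ}
s^\eps(\pm\delta) = \pm\delta\, K_\pm + \CO(\eps)\;,\qquad K_\pm \eqdef e^{-2V(\pm\eta)}\int_0^1 e^{-2V_\pm(u)}\,du\;,
\end{equ}
the $\CO(\eps)$ error absorbing both the fractional-period remainder in the ergodic splitting and the (uniformly bounded) contribution of the interval $[-\eps\eta, \eps\eta]$. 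Since $s^\eps(0) = 0$ and $(s^\eps)'$ is uniformly bounded, one also has $s^\eps(x_0) = \CO(\eps)$ for $|x_0|$ of order $\eps$. Recalling $\delta = \sqrt\eps$, substitution in the formula above gives
\begin{equ}
\Prob_x\bigl[Y^\eps(\sigma^\delta) = \pm\delta\bigr] = \frac{K_\mp}{K_+ + K_-} + \CO(\eps/\delta) = \frac{K_\mp}{K_+ + K_-} + \CO(\sqrt\eps)\;.
\end{equ}

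It finally remains to match the leading-order ratio $K_-/K_+$ with $\lambda_+ C_+^2 / \lambda_- C_-^2$, which by the definition of $p_\pm$ in Proposition~\ref{prop:assum} equals $p_+/p_-$. From \eref{e:deflambdapm} and the above decomposition of $V$, one has $\lambda_\pm = e^{2V(\pm\eta)} \int_0^1 e^{2V_\pm(u)}\,du$. Solving $\CL_\pm g_\pm = -b_\pm$ explicitly yields $1 + g_\pm'(x) = e^{-2V_\pm(x)}/\int_0^1 e^{-2V_\pm}$, and substituting into \eref{e:defCi} produces the standard identity
\begin{equ}
C_\pm^2 = \Bigl(\int_0^1 e^{2V_\pm}\cdot\int_0^1 e^{-2V_\pm}\Bigr)^{-1}\;.
\end{equ}
Hence $\lambda_\pm C_\pm^2 = e^{2V(\pm\eta)}/\int_0^1 e^{-2V_\pm}$, and therefore $\lambda_+ C_+^2 / \lambda_- C_-^2 = K_-/K_+$ as required. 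The main subtlety lies in obtaining the $\CO(\eps)$ (rather than $\CO(\delta)$) error in the asymptotics for $s^\eps(\pm\delta)$: this is precisely what makes the final error $\CO(\sqrt\eps)$ rather than $\CO(1)$ after dividing by the $\CO(\delta)$ denominator $s^\eps(a^\eps_+) - s^\eps(a^\eps_-)$.
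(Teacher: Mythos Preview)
Your proof is correct and follows essentially the same approach as the paper: both reduce to the scale function of the unrescaled process $X^\eps$, compute the exit probability as a ratio of scale-function increments, and evaluate these increments asymptotically using the periodicity of $e^{-2V}$ outside $[-\eta,\eta]$. Your presentation is slightly more explicit in places (e.g., deriving $1+g'_\pm = e^{-2V_\pm}/\int_0^1 e^{-2V_\pm}$ to obtain the identity for $C_\pm^2$, whereas the paper simply quotes the equivalent closed form $C_\pm^{-2} = \int e^{2V}\int e^{-2V}$), but the argument and the key error estimate $s^\eps(\pm\delta) = \pm\delta K_\pm + \CO(\eps)$ coincide with the paper's.
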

 \begin{proof}
For the proof of this result, it turns out to be simpler to consider the original
process $X^{\eps}(t)$.

Whenever $Y^{\eps}(t)$ exits
the set $(-\delta,\delta)$, due to the deterministic
relationship between the processes, $X^{\eps}(t)$ exits
a set $(-\delta',\delta'')$, where $\delta'$ and $\delta''$
 are contained in the interval
$(\delta - \eps\|g\|_{\infty},\delta + \eps\|g\|_{\infty})$.
Therefore, we just look at the exit of $X^{\eps}(t)$ from
an interval of this form as the computations are much easier to carry out. This is due to the simpler form of the scale function for $X^{\eps}(t)$ compared with that of $Y^{\eps}(t)$. It follows
from \cite[Exercise~VII.3.20]{MR1083357}  that the scale function of the
diffusion on $\mathbb{R}$ with generator $\CL\, =\,
\frac{1}{2}\,\sigma^{2}(x)\frac{d^{2}}{dx^{2}}\, +\,
b(x)\frac{d}{dx}$ is given by,
    \begin{equs} s(x) =
    \int_{c}^{x}\exp{\Bigl(-\int_{c}^{y}2b(z)\sigma^{-2}(z)dz\Bigr)}\,dy \end{equs}
    where $c$ is an arbitrary point in $\mathbb{R}$. Recall that the scale function
    of a real-valued process is a continuous, strictly increasing function such that for any $a < x < b$ in the set where the Markov process takes its values, one has
\begin{equs} \mathbb{P}_{x}\bigl(T_{b}<T_{a}\bigr) =
    \frac{s(x)-s(a)}{s(b)-s(a)} \;,
\end{equs}
where $T_{a}$, $T_{b}$ are the first hitting times of the points $a$ and $b$ respectively.
For $X^{\eps}(t)$ we have that $\sigma = 1$ and the drift
is equal to
$\frac{1}{\eps}\,b(x/\eps)$. We are from now on going to use the notation
$q_\eps(y) = \bigl(1 + \eps g(\cdot/ \eps)\bigr)^{-1}(y)$ for the transformation that allows recovery of $X^\eps$ from $Y^\eps$. We also denote by $T_a$ the first hitting time of the point $a \in \R$ by the process $X^\eps$. We also use the shorthand notation
\begin{equ}
F_\eps(u) = \exp \Bigl({-2\int_{0}^{u}\frac{1}{\eps}\,b(z/\eps)\,dz}\Bigr)\;,
\end{equ}
so that the scale function for $X^\eps$ is given by $s(z) = \int_0^z F_\eps(y)\,dy$.

With this notation at hand, we have, for $x \in (-\delta, \delta)$, that,
denoting the escape time of $Y^{\eps}(t)$ from
$(-\delta, \delta)$ by $\sigma^{\delta}$,
\begin{equs}
\mathbb{P}_{x}\bigl(Y^{\eps}(\sigma^{\delta}) \in I_+\bigr)
     &= \mathbb{P}_{x}\bigl(T_{\delta''}<T_{-\delta'}\bigr)
     = \frac{s\bigl(q_{\eps}(x)\bigr) - s(-\delta')}{s(\delta'') - s(-\delta')}\\
     &= \frac{\int_{0}^{q_{\eps}(x)}F_\eps(y)\,dy - \int_{0}^{-\delta'}F_\eps(y)\,dy} { \int_{0}^{-\delta''}F_\eps(y)\,dy -  \int_{0}^{-\delta'}F_\eps(y)\,dy}
     = \frac{ \int_{-\delta'}^{q_\eps(x)}F_\eps(y)\,dy} { \int_{-\delta'}^{\delta''}F_\eps(y)\,dy}\;.
\end{equs}
Noting that $F_\eps$ has the scaling property $F_\eps(u) = F_1(u/\eps)$, we thus obtain the identity
\begin{equ}[e:boundHP]
\mathbb{P}_{x}\bigl(Y^{\eps}(\sigma^{\delta}) \in I_+\bigr)
     = \frac{ \int_{-\delta'}^{q_\eps(x)}F_1(y/\eps)\,dy} { \int_{-\delta'}^{\delta''}F_1(y/\eps)\,dy}
     = \frac{ \int_{-\delta/\eps}^{0}F_1(y)\,dy + \CO(1)} { \int_{-\delta/\eps}^{\delta/\eps}F_1(y)\,dy + \CO(1)}\;,
\end{equ}
where we used the fact that $q_\eps(x) = \CO(\eps)$ and $F_1$ is uniformly bounded, due to the fact that the functions
$b_\pm$ are centred by assumption.

Note now that the effective diffusion coefficients $C_\pm$ can alternatively be expressed as \cite[Sec~13.6]{MR2382139}
\begin{equs}
C_+^2 = \biggr[\int_{\eta}^{\eta+1}\exp \bigl(-2V(u)\bigr)\,du\;\int_{\eta}^{\eta+1}\exp\bigl(2V(u)\bigr)\,du\;\biggr]^{-1}\;,
\end{equs}
and similarly for $C_-$.
Therefore, since $F_1$ is periodic away from $[-\eta, \eta]$, it follows immediately from the definitions of $\lambda_\pm$ and $C_{\pm}$ that
\begin{equ}
\int_{-N}^0 F_1(y)\,dy = \frac{1}{C_{-}^{2} \lambda_-} N + \CO(1)\;,\qquad
\int_{0}^N F_1(y)\,dy = \frac{1}{C_{+}^{2}  \lambda_+} N + \CO(1)\;.
\end{equ}
Since $\delta = \sqrt \eps$, combining these bounds with \eref{e:boundHP}
implies that $\mathbb{P}_{x}\bigl(Y^{\eps}(\sigma^{\delta}) \in I_+\bigr)
= C_{+}^{2}\lambda_+ / (C_{-}^{2}\lambda_+ + C_{+}^{2}\lambda_-) + \CO(\sqrt \eps)$, from which the requested bound follows.
\end{proof}

Combining Propositions~\ref{prop:firstbound}, \ref{prop:timeint}, and \ref{prop:exit} completes the proof of
Proposition~\ref{prop:assum}.
\end{proof}

 \section{Uniqueness and characterisation of the martingale problem}
\label{sec:unique}

To conclude this article, we show that solution to the martingale problem corresponding to $A$ is unique and is indeed given by the variant of skew
Brownian motion constructed in the introduction.

   The skew Brownian motion $B_p$ of `skewness' parameter $p$ is known to have generator $\CL_p = {1\over 2}\frac{d^{2}}{dx^{2}}$ on the set of functions that are continuous, twice continuously differentiable except at the origin where we have $p f'(0^{+}) = (1-p)f'(0^{-})$.
(See for example the review article \cite{MR2280299}.)
The process $B_{C_\pm,p}$ constructed in the introduction is given by
\begin{equ}
B_{C_\pm,p}(t) = G(B_p(t))\;,\qquad G(x) =
\left\{\begin{array}{cl}
	C_+ x & \text{if $x \ge 0$,} \\
	C_- x & \text{otherwise.}
\end{array}\right.
\end{equ}
Since $G$ is a continuous bijection, this is again a strong Markov process and it has generator given by
$Af =  \bigl(\CL_p (f \circ G)\bigr)\circ G^{-1}$ with $\cD(A) = \{f \,:\, f\circ G \in \cD(\CL_p)\}$. Using the relation \eref{e:relplambda},
it is now a straightforward
calculation to show that $\cD(A)$ consists precisely of those functions satisfying the derivative condition
in \eref{e:dcon}, hence we will have
\begin{equs}
\frac{p_{+}}{p_{-}} = \frac{C_{+}p}{C_{-}(1-p)}\;.
\end{equs}
This provides a clue as to how to show uniqueness easily. Let $Y$ denote any solution of the martingale problem corresponding to $A$ acting on $\cD(A)$, $Y$ also represents any possible limit point of the family $Y^{\eps}$ as $\eps\rightarrow0$. Defining $g$ like $G$, but with constants ${p/ C_+}$ and ${(1-p) / C_-}$ instead of $C_+$ and $C_-$, 
we note that $g$ satisfies the derivative condition at $0$ imposed
for elements of $\cD(A)$. Since $g$ doesn't vanish at infinity, we have $g \notin \cD(A)$, but we can approximate $g$ by a sequence $g_{n}\in\cD(A)$
such that $g=g_{n}$ on $[-n,n]$. Indeed, given $t>0$, the probability of escaping from $[-n,n]$ before time $t$ tends to zero as $n\rightarrow\infty$
uniformly for $\eps \in (0,1]$ by tightness. As a consequence, the process $V=g(Y)$ satisfies the SDE
\begin{equ}
V(t) = \int_{0}^{t} A_+ 1_{\{V(s)>0\}}+ A_- 1_{\{V(s)<0\}}\,dB_{s}\;,
\end{equ}
where $A_\pm$ are some constants.
It is known \cite[Theorem 2.1]{MR2203887} that this equation has a pathwise unique solution which in particular implies uniqueness in law.
Since $g$ is an invertible map, this immediately implies that $Y$ is unique in law and one can check that $Y$ is indeed the variant of skew Brownian motion
constructed in the introduction.

\bibliographystyle{Martin}
\bibliography{./onedimref}

\def\cprime{$'$}
\begin{thebibliography}{BMP05}
\expandafter\ifx\csname url\endcsname\relax
  \def\url#1{\texttt{#1}}\fi
\expandafter\ifx\csname urlprefix\endcsname\relax\def\urlprefix{URL }\fi

\bibitem[ACP03]{MR2021045}
\textsc{G.~Allaire}, \textsc{Y.~Capdeboscq}, and \textsc{A.~Piatnitski}.
\newblock Homogenization and localization with an interface.
\newblock \emph{Indiana Univ. Math. J.} \textbf{52}, no.~6, (2003), 1413--1446.

\bibitem[BC05]{MR2203887}
\textsc{R.~F. Bass} and \textsc{Z.-Q. Chen}.
\newblock One-dimensional stochastic differential equations with singular and
  degenerate coefficients.
\newblock \emph{Sankhy\=a} \textbf{67}, no.~1, (2005), 19--45.

\bibitem[BLP78]{lions}
\textsc{A.~Bensoussan}, \textsc{J.~Lions}, and \textsc{G.~Papanicolaou}.
\newblock \emph{Asymptotic analysis of periodic structures}.
\newblock North-Holland, Amsterdam, 1978.

\bibitem[BMP05]{MR2126985}
\textsc{A.~Bench{\'e}rif-Madani} and \textsc{{\'E}.~Pardoux}.
\newblock Homogenization of a diffusion with locally periodic coefficients.
\newblock In \emph{S\'eminaire de {P}robabilit\'es {XXXVIII}}, vol. 1857 of
  \emph{Lecture Notes in Math.},  363--392. Springer, Berlin, 2005.

\bibitem[EK86]{MR838085}
\textsc{S.~N. Ethier} and \textsc{T.~G. Kurtz}.
\newblock \emph{Markov processes}.
\newblock Wiley Series in Probability and Mathematical Statistics: Probability
  and Mathematical Statistics. John Wiley \& Sons Inc., New York, 1986.
\newblock Characterization and convergence.

\bibitem[FW93]{MR1245308}
\textsc{M.~I. Freidlin} and \textsc{A.~D. Wentzell}.
\newblock Diffusion processes on graphs and the averaging principle.
\newblock \emph{Ann. Probab.} \textbf{21}, no.~4, (1993), 2215--2245.

\bibitem[HK98]{MR1632945}
\textsc{B.~M. Hambly} and \textsc{T.~Kumagai}.
\newblock Heat kernel estimates and homogenization for asymptotically
  lower-dimensional processes on some nested fractals.
\newblock \emph{Potential Anal.} \textbf{8}, no.~4, (1998), 359--397.

\bibitem[HM10]{MultiD}
\textsc{M.~Hairer} and \textsc{C.~Manson}.
\newblock Periodic homogenization with an interface: the multi-dimensional
  case, 2010.
\newblock Preprint.

\bibitem[KK96]{MR1376343}
\textsc{T.~Kumagai} and \textsc{S.~Kusuoka}.
\newblock Homogenization on nested fractals.
\newblock \emph{Probab. Theory Related Fields} \textbf{104}, no.~3, (1996),
  375--398.

\bibitem[Koz93]{MR1218305}
\textsc{S.~M. Kozlov}.
\newblock Harmonization and homogenization on fractals.
\newblock \emph{Comm. Math. Phys.} \textbf{153}, no.~2, (1993), 339--357.

\bibitem[Kum04]{MR2083710}
\textsc{T.~Kumagai}.
\newblock Homogenization on finitely ramified fractals.
\newblock In \emph{Stochastic analysis and related topics in {K}yoto}, vol.~41
  of \emph{Adv. Stud. Pure Math.},  189--207. Math. Soc. Japan, Tokyo, 2004.

\bibitem[Lej06]{MR2280299}
\textsc{A.~Lejay}.
\newblock On the constructions of the skew {B}rownian motion.
\newblock \emph{Probab. Surv.} \textbf{3}, (2006), 413--466 (electronic).

\bibitem[Owh03]{MR2016606}
\textsc{H.~Owhadi}.
\newblock Anomalous slow diffusion from perpetual homogenization.
\newblock \emph{Ann. Probab.} \textbf{31}, no.~4, (2003), 1935--1969.

\bibitem[PS08]{MR2382139}
\textsc{G.~A. Pavliotis} and \textsc{A.~M. Stuart}.
\newblock \emph{Multiscale methods}, vol.~53 of \emph{Texts in Applied
  Mathematics}.
\newblock Springer, New York, 2008.
\newblock Averaging and homogenization.

\bibitem[PV82]{MR659505}
\textsc{G.~C. Papanicolaou} and \textsc{S.~R.~S. Varadhan}.
\newblock Diffusions with random coefficients.
\newblock In \emph{Statistics and probability: essays in honor of {C}. {R}.
  {R}ao},  547--552. North-Holland, Amsterdam, 1982.

\bibitem[RY91]{MR1083357}
\textsc{D.~Revuz} and \textsc{M.~Yor}.
\newblock \emph{Continuous martingales and {B}rownian motion}, vol. 293 of
  \emph{Grundlehren der Mathematischen Wissenschaften [Fundamental Principles
  of Mathematical Sciences]}.
\newblock Springer-Verlag, Berlin, 1991.

\bibitem[SV79]{MR532498}
\textsc{D.~W. Stroock} and \textsc{S.~R.~S. Varadhan}.
\newblock \emph{Multidimensional diffusion processes}, vol. 233 of
  \emph{Grundlehren der Mathematischen Wissenschaften [Fundamental Principles
  of Mathematical Sciences]}.
\newblock Springer-Verlag, Berlin, 1979.

\bibitem[Zhi95]{MR1474004}
\textsc{V.~V. Zhikov}.
\newblock Connectedness and homogenization. {E}xamples of the fractal
  conductivity.
\newblock In \emph{Homogenization and applications to material sciences
  ({N}ice, 1995)}, vol.~9 of \emph{GAKUTO Internat. Ser. Math. Sci. Appl.},
  421--430. Gakk\=otosho, Tokyo, 1995.

\end{thebibliography}
\end{document}